\newtheorem{theorem}{Theorem}[section]
\newtheorem{lemma}[theorem]{Lemma}
\newtheorem{corollary}[theorem]{Corollary}
\theoremstyle{remark}
\newtheorem{remark}[theorem]{Remark}
\theoremstyle{definition}
\newtheorem{assumption}[theorem]{Assumption}
\newtheorem{definition}[theorem]{Definition}
\newcommand\cbrk{\text{$]$\kern-.15em$]$}}
\newcommand\opar{\text{\,\raise.2ex\hbox{${\scriptstyle|}$}\kern-.34em$($}}
\newcommand\cpar{\text{$)$\kern-.34em\raise.2ex\hbox{${\scriptstyle |}$}}\,}
\def\qed{{\hfill $\Box$ \bigskip}}
\def\loc{{\rm loc}}
\def\XXint#1#2#3{{\setbox0=\hbox{$#1{#2#3}{\int}$}
\vcenter{\hbox{$#2#3$}}\kern-.5\wd0}}
\newcommand\bL{\mathbb{L}}
\newcommand\bH{\mathbb{H}}
\newcommand\bZ{\mathbb{Z}}
\newcommand\bD{\mathbb{D}}
\newcommand\bS{\mathbb{S}}
\newcommand\bE{\mathbb{E}}
\newcommand\bN{\mathbb{N}}
\newcommand\fR{\mathbf{R}}
\newcommand\fF{\mathbf{F}}
\newcommand\cB{\mathcal{B}}
\newcommand\cF{\mathcal{F}}
\newcommand\cH{\mathcal{H}}
\newcommand\cP{\mathcal{P}}
\newcommand\cR{\mathcal{R}}
\newcommand\cS{\mathcal{S}}
\newcommand\cM{\mathcal{M}}
\newcommand\cO{\mathcal{O}}
\newcommand\frH{\mathfrak{H}}
\newcommand{\mysection}[1]{\section{#1}
\setcounter{equation}{0}}
\begin{document}

\title[Non-linear SPDE$\mathbf{s}$ of divergence type]
{A regularity theory for    quasi-linear  Stochastic Partial Differential Equations in weighted Sobolev spaces}

\author{Ildoo Kim}
\address{Center for Mathematical Challenges, Korea Institute for Advanced Study (KIAS), 85 Hoegiro Dongdaemun-gu,
Seoul 130-722, Republic of Korea} \email{waldoo@kias.re.kr}
\thanks{The research of the first author was supported by the TJ Park Science  
         Fellowship of POSCO TJ Park Foundation}

\author{Kyeong-Hun Kim}
\address{Department of mathematics, Korea university, 1 anam-dong
sungbuk-gu, Seoul, south Korea 136-701}
\email{kyeonghun@korea.ac.kr}

\thanks{The research of the second
author was supported by Basic Science Research Program through the
National Research Foundation of Korea(NRF) funded by the Ministry of
Education, Science and Technology (20120005158)}

\subjclass[2000]{60H15, 35R60}

\keywords{Quasilinear stochastic partial differential equations, Equations of divergence type,  Weighted Sobolev space}

\begin{abstract}
We study  the  second-order  quasi-linear  stochastic partial differential equations (SPDEs) defined on $C^1$ domains. The coefficients are random functions  depending on $t,x$ and the unknown solutions.  We prove the uniqueness and existence of solutions in appropriate Sobolev spaces, and in addition, we obtain  $L_p$ and H\"older estimates of both the solution and its gradient.
\end{abstract}

\maketitle

\mysection{introduction}
In this article we present a weighted Sobolev space theory of the following  stochastic partial differential equation (SPDE):
\begin{align}
 \notag
du &= \Big[D_i \Big(a^{ij}(t,x,u)u_{x^j}+b^i(t,x,u) u + f^i\Big) + \bar{b}^i(t,x,u) u_{x^i} +c(t,x,u)u +f\Big]dt\\
&\quad +(\nu^k(t,x)u+g^k) dW_t^k, \qquad  t\leq \tau, \quad x\in \cO; \quad u(0,\cdot)=u_0.
					\label{main eqn}
\end{align}
Here $\tau$ is an arbitrary bounded stopping time, $\cO$ is a bounded $C^1$-domain in $\fR^d$, and $W^k_t$ $(k=1,2,\cdots)$ are independent one-dimensional Wiener processes defined on a probability space $(\Omega,\cF,P)$. The indices $i$ and $j$ move from $1$ to $d$, and $k$ runs through $\{1,2,3,\cdots\}$. The Einstein's summation convention with respect to $i,j$ and $k$ is assumed throughout the article. All the coefficients are random, the coefficients   $a^{ij}, b^i, \bar{b}^i, c$  depend  also on $t,x$ and the unknown $u$, and the coefficients $\nu^k$ ($k=1,2,\cdots$) depend  on $\omega$,  $t$, and $x$. 
We assume that  the coefficients are only measurable with respect to $(\omega,t)$, H\"older continuous with respect to $x$,  and Lipschitz continuous with respect to the unknown $u$.

Let $u(t,x)$ denote the density of diffusing particles at the time $t$ and the location $x$. Typically, the flux density $\fF(t,x)$ is proportional to $-\nabla u$ or more generally to $-\sum_j  a^{ij}u_{x^j}$, and the classical heat equation $u_t=D_i(a^{ij}u_{x^j})$ is a consequence of  the relation $u_t=-\text{div} \,\fF$. 
Then motivation of studying equation \eqref{main eqn} is obvious since the diffusion coefficients $a^{ij}$ related to the flux density $\fF(t,x)$ can  depend also on their point density $u(t,x)$. Our equation is this type general equation with noises and random external forces. 
The external forces $f^i$, $f$, and $g$ are contained in a weighted Sobolev space.
More precisely,
$$
f^{i}\in \bH^{\gamma_0}_{p,d}(\cO,\tau):=L_p(\Omega\times (0,\tau];H^{\gamma_0}_{p,d}(\cO)),
~f\in  \bH^{\gamma_0-1}_{p,d+p}(\cO,\tau),
~g \in\bH^{\gamma_0}_{p,d}(\cO,\tau,\l_2),
$$
where $p > d+2$ and $\gamma_0 \in ((d+2)/p,1)$. 
The spaces $H^{\gamma}_{p,d}(\cO)$ and $\bH^{\gamma}_{p,d}(\cO,\tau)$ are  introduced in Section 2. We only remark that if $\gamma$ is a natural number then $u \in H^{\gamma}_{p,d}(\cO)$
iff
$$
\rho^{|\alpha|}D^\alpha u \in L_p(\cO), \quad \forall |\alpha| \leq \gamma \quad \quad (\rho(x):=\text{dist}(x, \partial \cO)).
$$
 Under the setting, we prove that equation \eqref{main eqn} has a unique solution  in a certain weighted Sobolev space, actually in $\frH^1_{2,d}(\cO,\tau)\cap \frH^{1+\gamma_0-\varepsilon}_{p,d,\loc}(\cO,\tau)$,  ($\varepsilon>0$).  
Using    an embedding theorem related to this weighted Sobolev space, we get  the following interior H\"older estimates of
 the solution $u$ and its gradient $\nabla u$: 
for any constants $\kappa,\kappa_1,\alpha,\beta,\beta_1$ satisfying
  \begin{eqnarray*}
  1/p<\kappa_1<\kappa <1/2,  \quad 1-2\kappa-d/p>0,  \\
  \alpha<\gamma_0, \quad 1/p<\beta_1<\beta<1/2, \quad \alpha-2\beta-d/p>0,
  \end{eqnarray*}
it holds that for all $t<\tau$ (a.s.),
\begin{eqnarray}
|\rho^{-\varepsilon}u|_{C^{\kappa_1-1/p}([0,t],C^{1-2\kappa-d/p-\varepsilon}(\cO))}  <\infty,    \quad \forall \varepsilon \in [0, 1-2\kappa-d/p]    \label{good1}
\end{eqnarray}
  \begin{equation}
                                                 \label{good}
   |\rho^{\alpha-\varepsilon'}u_x|_{C^{\beta_1-1/p}([0,t],C^{\alpha-2\beta-d/p-\varepsilon'}(\cO))}\quad <\,\infty, \quad \forall \varepsilon' \in [0,\alpha-2\beta-d/p].
  \end{equation}
  Note that  (\ref{good1})  is a much better result than the one without $\rho$. For instance, (\ref{good1}) with  $\varepsilon=1-2\kappa-d/p$  implies
  $\sup_{s\leq t}|u(s,x)|=O(\rho^{1-d/p-2\kappa}(x))$. This shows how fast $u(t,x)\to 0$ as $\rho(x)\to 0$.
  By taking $p\to \infty$ one can make  $1-2\kappa-d/p$ (the H\"older exponent  of $u$ in  space variable) as close to $1$ as one wishes, and similarly
   $\kappa_1-1/p$ (the H\"older exponent of $u$ in time variable)  can be any number  in $(0,1/2)$.
  Also note that (\ref{good}) implies   that $u_x$ is H\"older continuous in $(t,x)$ only interior of $\cO$, that is $|u_x|^p_{C^{\beta_1-1/p}([0,t],C^{\alpha-2\beta-d/p}(K))}<\infty$ for  any compact set $K\subset \cO$.


Below we introduce  some related results handling divergence or non-divergence type SPDEs whose leading coefficients depend also on the solution $u$. 
 The  $1$-dimensional non-divergence type equation
\begin{align*}
du = a(t,x,u) u''dt +(b(t,x)u'+h(t,x)u) dW_t, \qquad  t\leq \tau, \quad x\in \fR.
\end{align*}
is studied \cite{da1996fully} under the assumption that coefficients $a$, $b$, $h$ are infinitely differentiable with bounded  derivatives.  A similar   equation
\begin{equation}
du=[a(t,x,u)u''+f(t,x)]dt+ g^k(t,x)  dW^k_t.
\end{equation}
 is studied in \cite{yoo1998unique}. Compared to  \cite{da1996fully},  the condition on $a$ is much weaker in \cite{yoo1998unique}. Here the diffusion coefficient $a(t,x,u)$ is H\"older continuous in $t$, differentiable in $x$, and twice continuously differentiable in $u$. Howerver both \cite{da1996fully} and \cite{yoo1998unique} considered only one-dimensional equation.
In \cite{kim2014some},  we obatined  $L_p$ and H\"older estimates  for the divergence type equation
\begin{align*}
du &= \Big[D_i \Big(a^{ij}(t,x,u)u_{x^j}+ f^i\Big) + f\Big]dt +g^k dW_t^k, \qquad  t\leq \tau, \quad x\in \cO,
\end{align*}
where $a^{ij}(t,x,u)$ are H\"older continuous in $x$ and twice continuously differentiable in $u$.
The present article is a generalization of \cite{kim2014some}. Firstly, we generalize the equation. We have multiplicative noises in the stochastic part of
 \eqref{main eqn} together with nonlinear lower order terms of solutions in the deterministic part.  Secondly, our smoothness conditions on the coefficients are  weaker than those in  \cite{kim2014some}. We only impose the Lipschitz continuity  to $a^{ij}(t,x,u)$ with respect to $u$.  Thirdly,  our $L_p$ and H\"older theory work for any $p >d+2$ and $\gamma_0 \in ((d+2)/p,1)$, whereas in \cite{kim2014some}, $p$ and $\gamma_0$ are some constants (hard to know exactly) coming from the deterministic theory.

Our approach is based on a weighted Sobolev space theory for divergence type linear SPDEs.  It might be possible to study equation 
(\ref{main eqn}) using an infinite dimensional SDE theory so called variational approach.
See, for instance, \cite{krylov1981stochastic, liustochastic} and references therein. The monotonicity or local monotonicity condition is crucial in such theory.  It is easy to check  that the operator $D_i(a^{ij}(t,x,u)u_{x^j})$ does not satisfy the monotonicity condition, but it is not clear to us if local monotonicity condition holds for this operator. Regardless of the possibility of using the variational approach, our approach has many advantages. In particular, it provides $L_p$ and H\"older estimates of both the solution and its gradient. Furthermore, as can be seen in (\ref{good1}) and (\ref{good}), it provides very delicate behaviors of the solution and its derivatives near the boundary.

This paper is organized as follows. We introduce our main results and related function spaces in Section 2. 
In Section 3, we collect some auxiliary results related to linear SPDEs. 
The (time) local well-poseness of equation \eqref{main eqn} is given in Section 4.
Finally, the proof of the main theorem is presented in Section 5.

We finish the introduction with  notation used in the article. 
$\bN$ and $\bZ$ denote the natural number system and the integer number system, respectively.
As usual $\fR^{d}$
stands for the Euclidean space of points $x=(x^{1},...,x^{d})$,
$\fR^d_+:=\{x=(x^1,\cdots,x^d)\in \fR^d: x^1>0\}$ and
$B_r(x):=\{y\in \fR^d: |x-y|<r\}$.
 For $i=1,...,d$, multi-indices $\alpha=(\alpha_{1},...,\alpha_{d})$,
$\alpha_{i}\in\{0,1,2,...\}$, and functions $u(x)$ we set
$$
u_{x^{i}}=\frac{\partial u}{\partial x^{i}}=D_{i}u,\quad
D^{\alpha}u=D_{1}^{\alpha_{1}}\cdot...\cdot D^{\alpha_{d}}_{d}u,
\quad  \nabla u=(u_{x^1}, u_{x^2}, \cdots, u_{x^d}).
$$
We also use the notation $D^m$ for a partial derivative of order $m$ with respect to $x$.  
For $p \in [1,\infty)$, a normed space $F$ 
and a  measure space $(X,\mathcal{M},\mu)$, $L_{p}(X,\cM,\mu;F)$
denotes the space of all $F$-valued $\mathcal{M}^{\mu}$-measurable functions
$u$ so that
\[
\left\Vert u\right\Vert _{L_{p}(X,\cM,\mu;F)}:=\left(\int_{X}\left\Vert u(x)\right\Vert _{F}^{p}\mu(dx)\right)^{1/p}<\infty,
\]
where $\mathcal{M}^{\mu}$ denotes the completion of $\cM$ with respect to the measure $\mu$. 
For $p=\infty$, we write $u \in L_{\infty}(X,\cM,\mu;F)$ iff
$$
\sup_{x}|u(x)| := \|u\|_{L_{\infty}(X,\cM,\mu;F)} 
:= \inf\left\{ \nu \geq 0 : \mu( \{ x: \|u(x)\|_F > \nu\})=0\right\} <\infty.
$$
If there is no confusion for the given measure and $\sigma$-algebra, we usually omit the measure and the $\sigma$-algebra. 
If we write $N=N(a,b,\cdots)$, this means that the
constant $N$ depends only on $a,b,\cdots$. 
 For functions  depending on $\omega$, $t$,  and $x$, the argument
$\omega \in \Omega$ will be usually omitted. 
We say that a stopping time $\tau$ is nonzero iff $P(\{ \tau \neq 0 \}) > 0$.

\mysection{main result}

Let $(\Omega,\cF,P)$ be a probability space and let
$\{\cF_{t},t\geq0\}$ be  an increasing filtration of
$\sigma$-fields on $\Omega$ satisfying the usual condition, i.e. 
$\cF_{t}\subset\cF$ contains all $(\cF,P)$-null sets and $\cF_t = \bigcap_{s>t} \cF_s$.
 By $\cP$  we denote the predictable $\sigma$-field, that is  $\cP$ is the smallest $\sigma$-field containing
the collection of all sets $A \times (s,t]$, where $0 \leq s \leq t <\infty$ and $A \in \cF_s$. 
The processes $W^1_t, W^2_t,\cdots$ are  independent one-dimensional Wiener
processes defined on $\Omega$, each of which is a Wiener
process relative to $\{\cF_{t},t\geq0\}$.


For $p >1$ and $\gamma \in \fR$, let
$H_{p}^{\gamma}=H_{p}^{\gamma}(\fR^{d})$ denote  the class of all
(tempered) distributions $u$  on $\fR^{d}$ such that
\begin{equation}
        \label{eqn norm}
\| u\| _{H_{p}^{\gamma}}:=\|(1-\Delta)^{\gamma/2}u\|_{L_{p}}<\infty,
\end{equation}
where
$$
(1-\Delta)^{\gamma/2} u = \cF^{-1} \left((1+|\xi|^2)^{\gamma/2}\cF (u) \right).
$$
Here $\cF$ and $\cF^{-1}$ are  Fourier and inverse Fourier transforms respectively. 
It is well-known that if $\gamma=1,2,\cdots$, then
$$
H^{\gamma}_p=W^{\gamma}_p:=\{u: D^{\alpha}_x u\in L_p(\fR^d), \, \,\,|\alpha|\leq \gamma\}, \quad \quad H^{-\gamma}_p=\left(H^{\gamma}_{p/{(p-1)}}\right)^*,
$$
where $\left(H^{\gamma}_{p/{(p-1}}\right)^*$ denotes the dual space of $H^{\gamma}_{p/{(p-1)}}$.
For  a  tempered distribution $u\in H^{\gamma}_p$ and $\phi\in
\cS(\fR^d)$, the action of $u$ on $\phi$ (or the image of $\phi$
under $u$) is defined as
$$(u,\phi)=\left((1-\Delta)^{\gamma/2}u ,
(1-\Delta)^{-\gamma/2}\phi \right)=\int_{\fR^d}
(1-\Delta)^{\gamma/2}u(x) \cdot (1-\Delta)^{-\gamma/2}\phi(x) \,dx.
$$
Let  $l_2$ denote the set of all sequences $a=(a^1,a^2,\cdots)$ such that
$$|a|_{l_{2}}:= \left(\sum_{k=1}^{\infty}|a^{k}|^{2}\right)^{1/2}<\infty.
$$
By $H_{p}^{\gamma}(l_{2})=H_{p}^{\gamma}(\fR^d ; l_2)$  we denote the class of all $l_2$-valued
(tempered) distributions $v=(v^1,v^2,\cdots)$ on $\fR^{d}$ such that
$$
\|v\|_{H_{p}^{\gamma}(l_{2})}:=\||(1-\Delta)^{\gamma/2}v|_{l_2}\|_{L_{p}}<\infty.
$$

\vspace{2mm}

Next we introduce weighted Sobolev spaces $H^{\gamma}_{p,\theta}(\cO)$ defined on domains, where $\gamma,\theta\in \fR$.  
Let $\cO$ be a bounded $C^1$ domain in $\fR^d$ and denote $\rho(x):=\text{dist}(x,\partial \cO)$. 
Then one can choose a smooth function $\psi$ defined on $\bar{\cO}$ satisfying the followings (see, e.g.
\cite{gilbarg1980intermediate, kim2004sobolev}):

$\bullet $ \, $\psi $ is comparable to $\rho$, that is there is a constant $N=N(\cO)$ so that 
 $$
N^{-1}\rho(x) \leq
\psi(x) \leq N\rho(x),\quad \forall x\in \cO.
$$

 $\bullet$ \,  $\psi$ is infinitely differentiable in $\cO$ (not up to the boundary), and for any  multi-index $\alpha$,
\begin{equation}
                                                             \label{03.04.01}
\sup_{\cO} \psi ^{|\alpha|}(x)|D^{\alpha}\psi_{x}(x)| <\infty. \nonumber
\end{equation}

 Fix a nonnegative  function $\zeta(x)=\zeta(x^1)\in
C^{\infty}_0(\fR_+)$ such that
\begin{equation}
                                       \label{eqn 5.6.5}
\sum_{n=-\infty}^{\infty}\zeta^p(e^{n}x^1)>c>0, \quad \forall x^1\in \fR_+,
\end{equation}
where $c$ is a constant.  Note that    any nonnegative function $\zeta$
with $\zeta>0$ on $[1,e]$ satisfies (\ref{eqn 5.6.5}).
 For $x\in \cO$ and $n\in\bZ=\{0,\pm1,...\}$
define
$$
\zeta_{n}(x)=\zeta(e^{n}\psi(x)).
$$
Then  we have $\sum_{n}\zeta_{n}\geq c>0$ in $\cO$ and
\begin{equation*}
\zeta_n \in C^{\infty}_0(\cO), \quad |D^m \zeta_n(x)|\leq
N(m)e^{mn}.
\end{equation*}
For $\theta,\gamma \in \fR$, let $H^{\gamma}_{p,\theta}(\cO)$ be the
set of all distributions $u$  on $\cO$ such
that
\begin{equation}
                                                 \label{10.10.03}
\|u\|_{H^{\gamma}_{p,\theta}(\cO)}^{p}:= \sum_{n\in\bZ} e^{n\theta}
\|\zeta_{-n}(e^{n} \cdot)u(e^{n} \cdot)\|^p_{H^{\gamma}_p} < \infty. \nonumber
\end{equation}
Similarly, for  $\l_2$-valued functions $g=(g^1,g^2,\cdots)$ we define
$$
\|g\|^p_{H^{\gamma}_{p,\theta}(\cO,\l_2)}=\sum_{n\in\bZ} e^{n\theta}
\|\zeta_{-n}(e^{n} \cdot)g(e^{n} \cdot)\|^p_{H^{\gamma}_p(\l_2)}.
$$

\noindent
It is known (see \cite{krylov1999weighted,lototsky2000sobolev}) that up to equivalent
norms the space $H^{\gamma}_{p,d}(\cO)$ is independent of the
choice of $\zeta$ and $\psi$. Moreover if $\gamma$ is a
non-negative integer then
$$
H^{\gamma}_{p,\theta}=\{u: \rho^{|\alpha|}D^{\alpha}u\in L_p(\cO, \rho^{\theta-d}dx),  |\alpha|\leq \gamma\},
$$
and
\begin{equation}
                              \label{eqn 02.09.1}
\|u\|^p_{H^{\gamma}_{p,\theta}(\cO)} \sim
\sum_{|\alpha|\leq \gamma}\int_{\cO} |\rho^{|\alpha|}D^{\alpha}u(x)|^p
\rho^{\theta-d}(x) \,dx.  \nonumber
\end{equation}

To   state our assumptions on the coefficients, we take some
notation from \cite{gilbarg2015elliptic,kim2004sobolev}. Denote $\rho(x,y)=\rho(x)\wedge \rho(y)$. For
   $\alpha\in(0,1]$,
 and $k=0,1,2,...$,  we define interior H\"older norm $|\cdot|^{(0)}_{k+\alpha}$ as follows.
 $$
 |f|^{(0)}_k=\sum_{|\beta|\leq k} \sup_{\cO} \rho^{|\beta|}(x)|D^{\beta}f(x)|,
 $$
 $$
  [f]^{(0)}_{k+\alpha}
=\sup_{\substack{x,y\in \cO\\ |\beta|=k}}
\rho^{k+\alpha}(x,y)\frac{|D^{\beta}f(x)-D^{\beta}f(y)|}
{|x-y|^{\alpha}}
 $$
$$
|f|^{(0)}_{k+\alpha}=|f|^{(0)}_{k}+
[f]^{(0)}_{k+\alpha}.
$$
For $l_2$-valued functions $f=(f^1,f^2,\cdots)$ we define $|f|^{(0)}_{k+\alpha}$  by using $|D^{\beta}f(x)|_{l_2}$ and $|D^{\beta}f(x)-D^{\beta}f(y)|_{l_2}$ in place of
$|D^{\beta}f(x)|$ and $|D^{\beta}f(x)-D^{\beta}f(y)|$, respectively.
One can easily check that there exists a constant $N>0$ such that for any $\gamma \in [0,1]$,
\begin{align}
					\label{0222 eqn 1}
|f|^{(0)}_{\gamma}
\leq N \left(|f|_{C(\cO)}+|\psi D f|_{C(\cO)} \right),
\end{align}
where $N$ is independent of $\gamma$ and $f$.

Below we collect some well-known properties of the space $H^{\gamma}_{p,\theta}(\cO)$.  For $\alpha \in \fR$, we write $f\in \psi^{\alpha}H^{\gamma}_{p,\theta}(\cO)$ if and only if $\psi^{-\alpha}f\in H^{\gamma}_{p,\theta}(\cO)$.

\begin{lemma}
                        \label{wei lem}

 (i) For any $\gamma,\theta\in \fR$, $C^{\infty}_c(\cO)$ is dense in $H^{\gamma}_{p,\theta}(\cO)$.

(ii) Assume that $\gamma-d/p=m+\nu$ for some $m=0,1,\cdots$ and
$\nu\in (0,1]$.  Then for any $u\in H^{\gamma}_{p,\theta}(\cO)$ and $i\in
\{0,1,\cdots,m\}$, we have
\begin{align*}
|\psi^{i+\theta/p}D^iu|_{C(\cO)}+|\psi^{m+\nu+\theta/p}D^m
u|_{C^{\nu}}\leq N \|u\|_{ H^{\gamma}_{p,\theta}(\cO)}.
\end{align*}

(iii) Let $\alpha\in \fR$, then
$\psi^{\alpha}H^{\gamma}_{p,\theta+\alpha
p}(\cO)=H^{\gamma}_{p,\theta}(\cO)$,
\begin{align*}
\|u\|_{H^{\gamma}_{p,\theta}(\cO)}\leq N
\|\psi^{-\alpha}u\|_{H^{\gamma}_{p,\theta+\alpha p}(\cO)}\leq
N\|u\|_{H^{\gamma}_{p,\theta}(\cO)}.
\end{align*}

(iv) Let $\varepsilon=0$ if $\gamma$ is an integer, and $\varepsilon>0$ otherwise. Then
\begin{align*}
\|a u\|_{H^{\gamma}_{p,\theta}(\cO)}\leq
N |a|^{(0)}_{|\gamma|+\varepsilon}\|u\|_{H^{\gamma}_{p,\theta}(\cO)}.
\end{align*}

(v) $\psi D_i, D_i\psi: H^{\gamma}_{p,\theta}(\cO)\to
H^{\gamma-1}_{p,\theta}(\cO)$ are bounded linear operators, and
$$
\|u\|_{H^{\gamma}_{p,\theta}(\cO)}\leq
N\|u\|_{H^{\gamma-1}_{p,\theta}(\cO)}+N \|\psi
\nabla u\|_{H^{\gamma-1}_{p,\theta}(\cO)}\leq N
\|u\|_{H^{\gamma}_{p,\theta}(\cO)},
$$
$$
\|u\|_{H^{\gamma}_{p,\theta}(\cO)}\leq
N\|u\|_{H^{\gamma-1}_{p,\theta}(\cO)}+N \|\nabla (\psi
u)\|_{H^{\gamma-1}_{p,\theta}(\cO)}\leq N
\|u\|_{H^{\gamma}_{p,\theta}(\cO)}.
$$

\end{lemma}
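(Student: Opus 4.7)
My overall plan is to prove all five parts by reducing statements on $\cO$ to statements on $\fR^d$ via the dyadic partition $\{\zeta_{-n}\}_{n\in\bZ}$ and the rescaling $x\mapsto e^n x$. Setting $v_n(y):=(\zeta_{-n}u)(e^n y)$, so that by definition $\|u\|^p_{H^{\gamma}_{p,\theta}(\cO)}=\sum_n e^{n\theta}\|v_n\|^p_{H^{\gamma}_p}$, the structural fact I would use repeatedly is that $\psi(x)\sim e^n$ on $\mathrm{supp}(\zeta_{-n})$ and that the bound $\sup_{\cO}\psi^{|\alpha|}|D^{\alpha}\psi_x|<\infty$ translates into uniform (in $n$) $C^k$ bounds on the rescaled function $y\mapsto e^{-n}\psi(e^n y)$ on the support of $\zeta_{-n}(e^n\cdot)$.

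For (i) I would approximate each $v_n$ in $H^{\gamma}_p(\fR^d)$ by $\varphi_n\in C^{\infty}_c(\fR^d)$, truncate the $n$-sum using summability of the tails $\sum_{|n|>M}e^{n\theta}\|v_n\|^p_{H^{\gamma}_p}$, and assemble $\tilde u_M(x):=\sum_{|n|\leq M}\varphi_n(e^{-n}x)$, which lies in $C^{\infty}_c(\cO)$ and approximates $u$ in $H^{\gamma}_{p,\theta}(\cO)$. For (ii) I would apply the classical Sobolev embedding $H^{\gamma}_p(\fR^d)\hookrightarrow C^{m,\nu}(\fR^d)$ to $v_n$, which gives
$$
\sup_y|D^k v_n(y)|+[D^m v_n]_{C^{\nu}}\leq N\|v_n\|_{H^{\gamma}_p}\leq Ne^{-n\theta/p}\|u\|_{H^{\gamma}_{p,\theta}(\cO)}.
$$
Unwinding $D^k v_n(y)=e^{nk}(D^k(\zeta_{-n}u))(e^n y)$ at a point $x=e^n y\in\mathrm{supp}(\zeta_{-n})$, where $\psi(x)\sim e^n$, converts the factor $e^{nk-n\theta/p}$ into $\psi(x)^{-k-\theta/p}$, yielding the weighted bound on $\zeta_{-n}u$. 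A Leibniz expansion together with $\sum_n\zeta_{-n}\geq c>0$ and the fact that only finitely many $\zeta_{-n}(x)$ are nonzero at each $x$ then transfers the bound from $\zeta_{-n}u$ to $u$.

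For (iii)--(v) the common theme is that multiplication by a function $\sigma$ on $\cO$ is a bounded operator on $H^{\gamma}_{p,\theta}(\cO)$ whenever $y\mapsto\sigma(e^n y)$ has $C^{|\gamma|+\varepsilon}$-norm bounded uniformly in $n$ on a fixed neighborhood of $\mathrm{supp}(\zeta_{-n}(e^n\cdot))$. In (iii), $\sigma=\psi^{\alpha}$ enjoys this property thanks to the bounds on $\psi^{|\alpha|}|D^{\alpha}\psi|$, and the factor $\psi^{\alpha}(e^n y)\sim e^{n\alpha}$ exactly accounts for the weight shift from $\theta+\alpha p$ down to $\theta$. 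In (iv), the rescaled version of $a$ has $C^{|\gamma|+\varepsilon}$-norm controlled by $|a|^{(0)}_{|\gamma|+\varepsilon}$ by definition of the interior H\"older norm, so the standard $H^{\gamma}_p$-multiplier estimate on $\fR^d$ applies term-by-term in $n$. In (v), $\psi D_i$ and $D_i\psi$ become after rescaling $(e^{-n}\psi(e^n\cdot))D_{y^i}$, plus lower-order bounded terms, which is uniformly bounded $H^{\gamma}_p\to H^{\gamma-1}_p$; the two-sided norm equivalence then follows by combining the bound in both directions with the integer-$\gamma$ characterization via $\rho^{|\alpha|}D^{\alpha}u$.

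I expect the main obstacle to be (iv) when $\gamma\notin\bZ$: the underlying pointwise multiplier estimate on $\fR^d$ of the form $\|au\|_{H^{\gamma}_p}\leq N|a|_{C^{|\gamma|+\varepsilon}}\|u\|_{H^{\gamma}_p}$ is a paraproduct/Kato--Ponce style result, and the small $\varepsilon>0$ is exactly what pays for the fractional regularity loss. I will need to verify that the dilation $y\mapsto a(e^n y)$ is absorbed uniformly in $n$ into $|a|^{(0)}_{|\gamma|+\varepsilon}$ rather than producing a constant that grows with $n$; this is where the weighted definition of $|f|^{(0)}_{k+\alpha}$, with its factors $\rho^{|\beta|}$ and $\rho^{k+\alpha}(x,y)$, becomes essential.
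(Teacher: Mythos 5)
The paper itself contains no proof of this lemma: it is quoted as a collection of known facts, with the actual arguments living in the cited references (Krylov 1999, Lototsky 2000, Kim--Krylov 2004). Your localization--dilation scheme via $v_n(y)=(\zeta_{-n}u)(e^n y)$, with $\psi\sim e^n$ on ${\rm supp}\,\zeta_{-n}$ and uniform $C^k$ bounds on the rescaled weight, is exactly the standard proof in those references, so in approach you match the source. Two points in your plan need repair before it is complete.

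First, in (i) the assembled approximant $\tilde u_M(x)=\sum_{|n|\leq M}\varphi_n(e^{-n}x)$ converges to $\bigl(\sum_n \zeta_{-n}\bigr)u$, not to $u$: the family $\{\zeta_{-n}\}$ is only bounded below by $c>0$, it is not a partition of unity. The fix is routine but must be said: either replace $\zeta_{-n}$ by $\eta_{-n}:=\zeta_{-n}/\sum_m\zeta_{-m}$ (whose rescalings $\eta_{-n}(e^n\cdot)$ retain uniform $C^k$ bounds thanks to $\sup_\cO \psi^{|\alpha|}|D^\alpha\psi_x|<\infty$), or note that multiplication by $\bigl(\sum_m\zeta_{-m}\bigr)^{\pm1}$ is bounded on $H^\gamma_{p,\theta}(\cO)$ by your own part (iv), so the discrepancy is an isomorphism away. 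Second, in (ii) the unweighted embedding $H^\gamma_p(\fR^d)\hookrightarrow C^{m,\nu}$ that you invoke holds only for $\nu\in(0,1)$; at the endpoint $\nu=1$ it fails (e.g.\ $H^{3/2}_2(\fR)\not\subset C^{0,1}$, since $u'\in H^{1/2}_2$ need not be bounded), so your argument as written does not deliver the case $\nu=1$ that the statement nominally includes; there the seminorm has to be read in the Zygmund sense. This endpoint is never used in the paper (every application has $\nu<1$ strictly), so the loss is harmless, but you should either exclude $\nu=1$ or say explicitly how you treat it. The rest is sound: (iii) via $\psi^\alpha(e^n y)\sim e^{n\alpha}$ with uniform derivative bounds; (iv) via the pointwise-multiplier theorem on $\fR^d$, where, as you correctly identify, the factors $\rho^{|\beta|}$ and $\rho^{k+\alpha}(x,y)$ in the interior H\"older norm are precisely what absorb the dilation uniformly in $n$ (with duality handling $\gamma<0$); and (v) via the commutator $\zeta_{-n}\psi D_i u=\psi D_i(\zeta_{-n}u)-\psi(D_i\zeta_{-n})u$ together with the $\fR^d$ identity $\|v\|_{H^\gamma_p}\sim\|v\|_{H^{\gamma-1}_p}+\|\nabla v\|_{H^{\gamma-1}_p}$ --- which is valid for all real $\gamma$, so the appeal to the integer-$\gamma$ characterization in your last step is unnecessary and would not cover fractional $\gamma$ anyway.
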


Now we introduce stochastic Banach spaces. 
For a stopping time $\tau$, denote $\opar 0, \tau \cbrk=\{(\omega,t): 0<t\leq \tau(\omega)\}$,
$$
\bH^{\gamma}_{p,\theta}(\cO,\tau)=L_p(\opar 0,\tau \cbrk,\cP;H^{\gamma}_{p,\theta}(\cO)), \quad
\bH^{\gamma}_{p,\theta}(\cO,\tau,l_2)=L_p(\opar 0,\tau \cbrk,\cP;H^{\gamma}_{p,\theta}(\cO,l_2)),
$$
$$
\bL_{p,\theta}(\cO,\tau)=\bH^{0}_{p,\theta}(\cO,\tau),\quad 
U^{\gamma}_{p,\theta}(\cO)=\psi^{1-2/p}L_p(\Omega,\cF_0;H^{\gamma-2/p}_{p,\theta}(\cO)),
$$
where
$$
\|u\|^p_{\bH^{\gamma}_{p,\theta}(\cO,\tau)}=\bE\int^{\tau}_0 \|u\|^p_{H^{\gamma}_{p,\theta}(\cO)}dt, \quad
\|f\|^p_{U^{\gamma}_{p,\theta}(\cO)}:=\bE\|\psi^{2/p-1}f\|^p_{H^{\gamma-2/p}_{p,\theta}(\cO)}.
$$
For instance,  $u\in \bH^{\gamma}_{p,\theta}(\cO,\tau)$ if $u$  has an  $H^{\gamma}_{p,\theta}(\cO)$-valued $\cP$-measurable version   $v$ defined on $\opar 0, \tau \cbrk$ (i.e. $u=v$ a.e. in $\opar 0, \tau \cbrk$) and  $\|u\|_{\bH^{\gamma}_{p,\theta}(\cO,\tau)}<\infty$.

\begin{definition}
                \label{definition name}
We write   $u\in \frH^{\gamma+1}_{p,\theta}(\cO,\tau)$  if
 $u\in \psi\bH^{\gamma+1}_{p,\theta}(\cO,\tau)$,
$u_0 \in U^{\gamma+1}_{p,\theta}(\cO)$ and  for some $h \in
\psi^{-1}\bH^{\gamma-1}_{p,\theta}(\cO,\tau)$ and $g=(g^1,g^2,\cdots)\in \bH^{\gamma}_{p,d}(\cO,\tau,\l_2)$,  it holds that  
$$
du=hdt+g^kdw^k_t, \qquad t\leq \tau; \quad u(0,\cdot)=u_0
$$
in the sense of distributions, that is for any $\phi\in C^{\infty}_c(\cO)$, the equality
$$
(u(t),\phi)=(u_0,\phi)+\int^t_0 (h(s),\phi)ds+\sum_k \int^t_0 (g^k(s),\phi)dw^k_s
$$
holds for all $t\leq \tau$ (a.s.). In this case we write
$$
 h=\bD u, \quad \text{and} \quad g=\bS u.
$$
Especially, we say that $u$ is a solution to equation \eqref{main eqn} if
\begin{align*}
\bD u = D_i(a^{ij}(u)u_{x^j}+b^i(u) u+ \bar{b}^i(u) u_{x^i} +c(u)u+ f^i) +f,  \quad
\bS u = \nu u+g.
\end{align*}
The norm in  $\frH^{\gamma+1}_{p,\theta}(\cO,\tau)$ is given by
$$
\|u\|_{\frH^{\gamma+1}_{p,\theta}(\cO,\tau)}=
\|\psi^{-1}u\|_{\bH^{\gamma+1}_{p,\theta}(\cO,\tau)} + \|\psi
\bD u\|_{\bH^{\gamma-1}_{p,\theta}(\cO,\tau)}  + \|\bS u\|_{\bH^{\gamma}_{p,\theta}(\cO,\tau,\l_2)}+
\|u(0,\cdot)\|_{U^{\gamma+1}_{p,\theta}(\cO)}.
$$
Finally,
we write $u \in \frH^{\gamma+1}_{p,\theta,\text{loc}}(\cO,\tau)$ if there exists a sequence of stopping times  $\tau_n \uparrow \tau$  so that  $u \in \frH^{\gamma+1}_{p,\theta}(\cO,\tau_n)$.
\end{definition}

\begin{theorem}
                                                     \label{embedding}
%
(i) For any $\gamma, \theta \in \fR$ and $p\geq 2$, $\frH^{\gamma+1}_{p,\theta}(\cO,\tau)$ is a Banach space. 

(ii)  If $\tau\leq T$, $p>2$ and
 $1/2 > \beta > \alpha > 1/p$, then for any  $u \in \frH^{\gamma+1}_{p,\theta}(\cO,\tau)$,  it holds  that $u \in C^{\alpha -1/p}([0,\tau], H^{\gamma+1-2\beta}_{p,\theta}(\cO))$ $(a.s.)$ and
\begin{equation}
                       \label{eqn 111}
\bE | \psi^{2\beta-1} u|^p_{C^{\alpha -1/p}([0,\tau],H_{p,\theta}^{1+\gamma-2\beta}(\cO))} \leq N(d,p, \alpha, \beta, T) \|u\|^p_{\frH^{1+\gamma}_{p,\theta}(\cO,\tau)}.
\end{equation}

(iii) If $p=2$ then (\ref{eqn 111}) holds with $\beta=1/2$ and $\alpha=1/p=1/2$. That is, if $u\in \frH^{\gamma+1}_{2,\theta}(\cO,\tau)$, then 
$u\in C([0,\tau];H^{\gamma}_{2,\theta}(\cO))$ (a.s.), and 
$$
\bE \sup_{t\leq T} \|u\|^2_{H^{\gamma}_{2,\theta}(\cO)}\leq N\|u\|^2_{\frH^{\gamma+1}_{2,\theta}(\cO,\tau)}.
$$
\end{theorem}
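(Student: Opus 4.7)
The overall strategy is to reduce each assertion to known results on $\fR^d$ via the Krylov--Lototsky zoom-in decomposition built into the definition of $H^{\gamma}_{p,\theta}(\cO)$. For $n \in \bZ$, I would work with the scaled functions $v_n(t,x) := \zeta_{-n}(e^{n}x)\, u(t, e^{n}x)$, which live on $\fR^d$ and are supported, uniformly in $n$, on a fixed bounded set. The key facts are that $\psi(e^{n}x) \sim e^{n}$ on the support of $\zeta_{-n}(e^{n}\cdot)$, and that the weighted norm $\|u\|^{p}_{H^{\gamma}_{p,\theta}(\cO)}$ is, up to equivalence, the $\ell^{p}(\bZ)$ sum of $e^{n\theta}\|v_n\|^{p}_{H^{\gamma}_{p}}$.

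For part (i), I would take a Cauchy sequence $\{u^{(n)}\}$ in $\frH^{\gamma+1}_{p,\theta}(\cO,\tau)$. Each of the component norms in Definition \ref{definition name} lies in a Banach space, so $\psi^{-1}u^{(n)}$, $\psi \bD u^{(n)}$, $\bS u^{(n)}$, and $u^{(n)}(0,\cdot)$ converge to some $\psi^{-1}u$, $\psi h$, $g$, $u_0$ in their respective spaces. The only nontrivial point is to pass the distributional SPDE identity to the limit: for fixed $\phi \in C^{\infty}_c(\cO)$, the deterministic integrals converge in $L_p(\Omega \times [0,\tau])$ by boundedness of $\phi$ against $\psi^{-1}$ on the compact support, and the stochastic integrals converge by the Burkholder--Davis--Gundy inequality (after passing to a subsequence one gets uniform-in-$t$ a.s.\ convergence), yielding $du = h\,dt + g^k\,dW^k_t$.

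The main work is part (ii). First I would establish or invoke the flat-space prototype: if $w$ satisfies $dw = \tilde h\,dt + \tilde g^k\,dW^k$ on $\fR^d$ with $w_0 \in L_p(\Omega,\cF_0; H^{\gamma+1-2/p}_p)$, $\tilde h \in \bH^{\gamma-1}_p(\tau)$, $\tilde g \in \bH^{\gamma}_p(\tau,\l_2)$, then for $1/2 > \beta > \alpha > 1/p$,
\begin{equation*}
\bE |w|^p_{C^{\alpha - 1/p}([0,\tau]; H^{\gamma+1-2\beta}_p)} \leq N \bigl(\|w\|^p_{\bH^{\gamma+1}_p(\tau)} + \|\tilde h\|^p_{\bH^{\gamma-1}_p(\tau)} + \|\tilde g\|^p_{\bH^{\gamma}_p(\tau,\l_2)} + \bE\|w_0\|^p_{H^{\gamma+1-2/p}_p}\bigr).
\end{equation*}
This follows from BDG applied to $\int_s^t \tilde g\,dW$ in $H^{\gamma-2\beta}_p$ (giving a moment bound of order $|t-s|^{p\beta}$ after absorbing $2\beta$ derivatives into the $(1-\Delta)^{-\beta}$ operator), H\"older's inequality on the deterministic integral, and Kolmogorov's continuity criterion; the slack $\beta > \alpha$ is exactly what Kolmogorov consumes. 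Then I would apply this inequality to each $v_n$, noting that the SPDE satisfied by $v_n$ has drift and diffusion obtained from $\bD u$ and $\bS u$ together with commutators with $\zeta_{-n}(e^{n}\cdot)$, whose derivatives are bounded uniformly in $n$. Since $\psi(e^{n}x)^{2\beta-1} \sim e^{n(2\beta-1)}$ on the support of each $v_n$, multiplying the $n$-th estimate by $e^{np(2\beta-1)}$ and summing over $n$ with weight $e^{n\theta}$ produces, on the left, the claimed H\"older--$H^{\gamma+1-2\beta}_{p,\theta}$ norm of $\psi^{2\beta-1}u$ (via part (iii) of Lemma \ref{wei lem}), and on the right the norm $\|u\|^p_{\frH^{\gamma+1}_{p,\theta}(\cO,\tau)}$. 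The main obstacle is precisely this bookkeeping: verifying that the localization errors coming from derivatives of $\zeta_{-n}$ and from commuting $\psi^{2\beta-1}$ with the cutoffs are absorbed by the lower-order terms of $\|u\|_{\frH^{\gamma+1}_{p,\theta}}$. Part (iii) is easier: with $p=2$ the stochastic integral is an $H^{\gamma}_{2,\theta}$-valued continuous martingale, so applying Itô's formula to $\|u(t)\|^2_{H^{\gamma}_{2,\theta}}$, using the duality $\bH^{\gamma-1}_{2,\theta} \sim (\bH^{\gamma+1}_{2,\theta})^{*}$ via $\psi$-weighted pairings together with BDG, yields both continuity of sample paths in $H^{\gamma}_{2,\theta}$ and the displayed estimate.
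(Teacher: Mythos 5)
The paper does not actually prove this theorem: it is quoted from the literature, namely Krylov's half-space results \cite[Theorem 4.1, Remark 4.5]{krylov2001some}, transferred to bounded $C^1$ domains by localization and boundary flattening in \cite{kim2004lq} and \cite{lototsky2001linear}. Your plan instead re-derives it by summing flat-space estimates over the dyadic pieces $v_n$. Your sketches of (i) and (iii) are standard and essentially sound (one small point: no commutators arise at all in the equation for $v_n$, since the cutoffs are time-independent and $\frH^{\gamma+1}_{p,\theta}$ is defined through $\bD u$, $\bS u$ with no spatial operator; $dv_n=h_n\,dt+g_n^k\,dW^k_t$ exactly).

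The genuine gap is in (ii), and it is precisely the ``bookkeeping'' you defer. For a bounded domain only $n\leq N_0$ contribute, and on the support of $\zeta_{-n}(e^n\cdot)$ one has $\psi(e^nx)\sim e^n$, so the $\frH^{\gamma+1}_{p,\theta}$-norm controls $\sum_n e^{n\theta}e^{-np}\|v_n\|^p_{\bH^{\gamma+1}_p}$, $\sum_n e^{n\theta}e^{np}\|h_n\|^p_{\bH^{\gamma-1}_p}$ and $\sum_n e^{n\theta}\|g_n\|^p_{\bH^{\gamma}_p(\l_2)}$, with \emph{three different} weights. Multiplying the $n$-th flat estimate by $e^{n(2\beta-1)p}$, as you propose, forces the single weight $e^{n(2\beta-1)p}$ onto all three right-hand terms; the ratios $e^{n(2\beta-2)p}$ (drift) and $e^{n(2\beta-1)p}$ (noise) blow up as $n\to-\infty$, i.e., exactly near the boundary, so the sum does not close. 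The missing mechanism is the parabolic time rescaling $t\mapsto e^{2n}t$ (with rescaled Wiener processes): for $\tilde v_n(s,x)=e^{-n}v_n(e^{2n}s,x)$ the three weights match identically, and the H\"older seminorm picks up the factor $e^{2np(\beta-\alpha)}$ --- this is where $\beta>\alpha$ and the weight $\psi^{2\beta-1}$ are actually consumed. But then the prototype is applied on horizons $e^{-2n}T\to\infty$, so one must also control the growth in $T$ of its constant, which your sketch does not address; this is exactly why \cite{krylov2001some} proves the half-space case directly in the dilation-invariant weighted spaces rather than by summation, and why the domain case is then handled by flattening. A second, lesser flaw: your justification of the flat prototype fails as stated, since $\int_s^t\tilde g\,dW$ has no spatial smoothing and $\tilde g$ lies only in $\bH^{\gamma}_p$ while the target norm is $H^{\gamma+1-2\beta}_p$ with $1-2\beta>0$; ``absorbing $2\beta$ derivatives into $(1-\Delta)^{-\beta}$'' does not produce $|t-s|^{p\beta}$. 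The known proof goes through the mild (heat semigroup) representation plus the factorization method; since you allow yourself to invoke the prototype as known, this is venial, but the summation step is the one that would fail as written.
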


\begin{proof}
This theorem is proved by Krylov  in \cite{{krylov2001some}} if $\cO=\fR^d_+$; see  \cite[Theorem 4.1]{krylov2001some} for (ii)  and \cite[Remark 4.5]{{krylov2001some}} for (iii).
For bounded $C^1$ domains, see e.g. Lemma 3.1 of \cite{kim2004lq}  and (2.21) of \cite{lototsky2001linear}.
\end{proof}

In this article we mostly  use the above theorem when  $\theta=d$, and thus we 
 only consider the case $\theta =d$  in the following corollary.
\begin{corollary}
					\label{hol cor}
(i) Let $\hat{\alpha}:=1-d/p-2/p>0$. Then for any $\kappa\in (0,\hat{\alpha})$,
\begin{equation}
       \label{eqn 10.20}
       \bE |u|^p_{C([0,\tau];C^{\kappa}(\cO))}+\bE|u|^p_{C^{\kappa/2}([0,\tau] ;C(\cO))} \leq N \|u\|^p_{\cH^1_{p,d}(\cO,\tau)}.
       \end{equation}

	(ii) Let the constants  $\kappa,\kappa_1,\gamma,\beta$ and $\beta_1$ staisfy
  \begin{eqnarray*}
  1/p<\kappa_1<\kappa <1/2,  \quad 1-2\kappa-d/p>0,  \\
 1/p<\beta_1<\beta<1/2, \quad \gamma-2\beta-d/p>0.
  \end{eqnarray*}
Then for any $\varepsilon$ and $\varepsilon'$ satisfying
$$
0\leq \varepsilon \leq 1-2\kappa-d/p, \quad 0\leq \varepsilon'\leq \gamma-2\beta-d/p,
$$
we have
\begin{align}
						 \label{first}
&\bE|\psi^{-\varepsilon}u|^p_{C^{\kappa_1-1/p}([0,\tau],C^{1-2\kappa-d/p-\varepsilon}(\cO))}
+\bE|\rho^{\gamma-\varepsilon'}u_x|^p_{C^{\beta_1-1/p}([0,\tau],C^{\gamma-2\beta-d/p-\varepsilon'}(\cO))} \\
						\notag
& \leq N\|u\|^p_{\frH^{1+\gamma}_{p,d}(\cO,\tau)}. 
\end{align}
In particular,  we have
\begin{equation}
          \label{embed eqn}
       \bE |u |^p_{C([0,\tau]\times \cO))}+\bE | \psi^\gamma D u |^p_{C([0,\tau]\times \cO))}\leq N  \|u\|^p_{\frH_{p,d}^{1+\gamma}(\cO, \tau)}.
       \end{equation}
\end{corollary}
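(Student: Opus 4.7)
The plan is to combine Theorem \ref{embedding}(ii) with the weighted Sobolev embedding Lemma \ref{wei lem}(ii) and the weight-shift identity Lemma \ref{wei lem}(iii). Theorem \ref{embedding}(ii) alone provides a Hölder-in-time estimate valued in $H^{1+\gamma-2\beta}_{p,d}(\cO)$, and the weighted embedding lemmas convert the spatial Sobolev norm into pointwise (weighted) Hölder norms. The content of the corollary is really an algebraic matching of weight exponents.

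First I will apply Theorem \ref{embedding}(ii) with exponents $1/2 > \beta > \alpha > 1/p$ and smoothness parameter $\gamma' \leq \gamma$ (using the trivial inclusion $\frH^{1+\gamma}_{p,d}(\cO,\tau) \hookrightarrow \frH^{1+\gamma'}_{p,d}(\cO,\tau)$), obtaining
$$
\bE |\psi^{2\beta-1} u|^p_{C^{\alpha-1/p}([0,\tau], H^{1+\gamma'-2\beta}_{p,d}(\cO))} \leq N \|u\|^p_{\frH^{1+\gamma}_{p,d}(\cO,\tau)}.
$$
By Lemma \ref{wei lem}(iii) with shift $1-2\beta$, the spatial norm $\|\psi^{2\beta-1} v\|_{H^{1+\gamma'-2\beta}_{p,d}(\cO)}$ is equivalent to $\|v\|_{H^{1+\gamma'-2\beta}_{p,\theta}(\cO)}$ where $\theta := d - (1-2\beta)p$, so that $\theta/p = d/p + 2\beta - 1$. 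Applying Lemma \ref{wei lem}(ii) pointwise in $t$ to this shifted-$\theta$ space (after the Sobolev inclusion $H^{s_1}_{p,\theta} \hookrightarrow H^{s_2}_{p,\theta}$ for $s_1 \geq s_2$, in order to land on the critical smoothness) produces weight exponents $\sigma + \theta/p$ on $u$ and $1 + \sigma + \theta/p$ on $Du$, where $\sigma$ is the resulting Hölder exponent in the spatial variable.

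For the two terms in (\ref{first}), the parameters are selected so these weight exponents collapse to the targeted values. For the $u$-term, the choice $\gamma' = 0$, $\beta = \kappa$, $\alpha = \kappa_1$ and $\sigma = 1 - 2\kappa - d/p - \varepsilon$ yields $\sigma + \theta/p = -\varepsilon$; for the $Du$-term, $\gamma' = \gamma$, $\alpha = \beta_1$ and $\sigma = \gamma - 2\beta - d/p - \varepsilon'$ yields $1 + \sigma + \theta/p = \gamma - \varepsilon'$. The hypotheses $\kappa_1 < \kappa < 1/2$ and $\beta_1 < \beta < 1/2$ guarantee $\alpha < \beta$, and the positivity conditions $1-2\kappa-d/p > 0$ and $\gamma - 2\beta - d/p > 0$ ensure $\sigma \geq 0$ throughout the admissible range of $\varepsilon, \varepsilon'$. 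The bound (\ref{embed eqn}) follows from (\ref{first}) by setting $\varepsilon = \varepsilon' = 0$ and using the trivial inclusion $C^{\alpha-1/p}([0,\tau]; C^\sigma(\cO)) \hookrightarrow C([0,\tau] \times \cO)$.

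Part (i) is obtained along the same lines, but applied separately to each summand. For $|u|_{C([0,\tau]; C^\kappa(\cO))}$, pick $\beta \in (1/p, (1-\kappa-d/p)/2)$; for $|u|_{C^{\kappa/2}([0,\tau]; C(\cO))}$, pick $\alpha \in (\kappa/2 + 1/p, \beta)$ with $\beta < (1-d/p)/2$. In both cases the interval is non-empty precisely when $\kappa < \hat{\alpha} = 1 - d/p - 2/p$. The only real difficulty in carrying out the full proof is parameter bookkeeping: verifying that the constraints on $\alpha, \beta, \gamma', \sigma$ are simultaneously feasible within the corollary's hypotheses, and confirming that the weight-exponent algebra collapses exactly to $-\varepsilon$ and $\gamma - \varepsilon'$.
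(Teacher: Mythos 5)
Your proposal is correct and follows essentially the same route as the paper's proof: Theorem \ref{embedding}(ii) with $(\beta,\alpha)=(\kappa,\kappa_1)$ (resp.\ $(\beta,\beta_1)$ for the gradient term), the weight shift of Lemma \ref{wei lem}(iii), and then Lemma \ref{wei lem}(ii) applied after lowering the spatial smoothness by $\varepsilon$ (resp.\ $\varepsilon'$), which reproduces exactly the paper's weight-exponent bookkeeping, including the reduction $\frH^{1+\gamma}_{p,d}\subset\frH^{1}_{p,d}$ for the $u$-term. The only point to phrase carefully is the endpoint cases $\varepsilon=1-2\kappa-d/p$ and $\varepsilon'=\gamma-2\beta-d/p$, where $\sigma=0$ and Lemma \ref{wei lem}(ii) cannot be invoked with $\nu=0$; as in the paper, these follow instead from the sup-norm terms $|\psi^{i+\theta/p}D^{i}u|_{C(\cO)}$ of the application at the un-reduced smoothness.
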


\begin{proof}
(i) We only consider the first term of (\ref{eqn 10.20}). The second one can be treated similarly.  Take $1/p<\alpha<\beta<1/2$ such that 
$$
1-2\beta-d/p =\kappa.
$$
To apply  Lemma \ref{wei lem}(ii) we take $\gamma=1-2\beta$, $\theta=d+p(2\beta-1)$ and $\nu=\gamma-d/p=\kappa$, and get
$$
|u|_{C^{\kappa}(\cO)}\leq N\|u\|_{H^{1-2\beta}_{p,d+p(2\beta-1)}(\cO)}\leq N \|\psi^{2\beta-1}u\|_{H^{1-2\beta}_{p,d}(\cO)},
$$
where the second inequality is due to Lemma \ref{wei lem}(iii). Therefore  the claim follows from (\ref{eqn 111}). 

(ii)  Since, $u\in \frH^{1}_{p,d}(\cO,\tau)$,  by  Theorem \ref{embedding}, we get
$$
\bE |\psi^{-1+2\kappa}u|^p_{C^{\kappa_1-1/p}([0,\tau],H^{1-2\kappa}_{p,d}(\cO))}
\leq N \|u\|^p_{\frH^{1}_{p,d}(\cO,\tau)}<\infty.
$$
  Lemma \ref{wei lem}(ii) with $\gamma=1-2\kappa$ and $\nu=1-2\kappa-d/p$,
$$
|\psi^{-1+d/p+2\kappa}u|_{C(O)} + |u|_{C^{1-d/p-2\kappa}(\cO)}\leq N\|\psi^{-1+2\kappa}u\|_{H^{1-2\kappa}_{p,d}(\cO)}.
$$
This takes care of the first term of (\ref{first}) if  $\varepsilon=1-d/p-2\kappa$ or $\varepsilon=0$. Also, if $\varepsilon<1-2\kappa-d/p$, 
apply  Lemma \ref{wei lem}(ii)  with $\gamma=1-2\kappa-\varepsilon$ and $\nu=1-2\kappa-d/p-\varepsilon$ to get
$$
|\psi^{-\varepsilon}u|_{C^{1-d/p-2\kappa-\varepsilon}(\cO)}\leq N\|\psi^{-1+2\kappa}u\|_{H^{1-2\kappa-\varepsilon}_{p,d}(\cO)}\leq N \|\psi^{-1+2\kappa}u\|_{H^{1-2\kappa}_{p,d}(\cO)}.
$$
Hence the first term of (\ref{first}) is handled. The second term is treated similarly using $u\in \frH^{1+\gamma}_{p,d}(\cO,\tau)$,  instead of $u\in \frH^{1}_{p,d}(\cO,\tau)$.
\end{proof}

Below are our assumptions on the coefficients.

\begin{assumption}[Measurability] 
					\label{as mea}
The coefficients $a^{ij}(t,x,u)$, $b^i(t,x,u)$, $\bar{b}^i(t,x,u)$, $c(t,x,u)$ 
are  $\cP \times \cB(\fR^d) \times \cB(\fR) $-measurable 
and $\nu^k(t,x)$  are $\cP \times \cB(\fR^d)$-measurable.  \end{assumption}

\begin{assumption}[Ellipticity and Boundedness]                                                          \label{as el}
  There exist   constants $\delta_0\in (0,1]$ and $K_1>0$ 
 such that
$$
 \delta_0 |\xi|^2 \leq {a}^{ij}(t,x,u) \xi^i \xi^j \leq   \delta^{-1}_0 |\xi|^2
$$
and
$$ 
 |a^{ij}(t,x,u)|+  |b^i(t,x,u)|+|\bar{b}^i(t,x,u)|+|c(t,x,u)| +|\nu^k(t,x)|_{l_2} \leq K_1
 $$
  for all $\omega,t,x,u$, and $\xi\in \fR^d$.
  \end{assumption}

\begin{assumption}[Interior H\"older continuity in $x$]
					   \label{as bd 2}
 $p  > d+2 $, $\gamma_0 \in \left(\frac{d+2}{p},1\right)$, and             
 there exists a $K_2$ such that for any $\omega,t,u$,
\begin{align*}
[a^{ij}(t,\cdot,u)]^{(0)}_{\gamma_0} 
+[ b^i(t,\cdot,u)]^{(0)}_{\gamma_0}
+[ \bar b^i(t,\cdot,u)]^{(0)}_{\gamma_0}
+[ c(t,\cdot,u)]^{(0)}_{\gamma_0}
+|\nu(t,\cdot)|^{(0)}_{1+\gamma_0} 
\leq K_2.
\end{align*}
 \end{assumption}

\begin{assumption}[Lipschitz continuity with respect to the unknown]
					\label{as co con}
There exists a constant $K_3$ such that for any $\omega,t,x,u,v,i,j$,
\begin{align*}
&|a^{ij}(t,x,u) - a^{ij}(t,x,v)| 
+\psi(x)|b^{i}(t,x,u) - b^{i}(t,x,v)| +\psi(x)|\bar b^i(t,x,u) - \bar b^i (t,x,v)| \\
&\quad +\psi^2(x)|c(t,x,u) - c(t,x,v)| \leq K_3|u-v|.
\end{align*}
\end{assumption}

Here is the main result of this article.

\begin{theorem}
                             \label{main thm}
Let  $\tau \leq T$ be a stopping time.
Suppose Assumptions \ref{as mea}, \ref{as el},  \ref{as bd 2}, and \ref{as co con} hold.   
Then  for any given $f^{i}\in \bH^{\gamma_0}_{p,d}(\cO,\tau)$,
$f\in \psi^{-1} \bH^{\gamma_0-1}_{p,d}(\cO,\tau)$,
$g \in\bH^{\gamma_0}_{p,d}(\cO,\tau,\l_2)$, 
and $u_0 \in U^{\gamma_0+1}_{p,d}(\cO)$, equation (\ref{main eqn}) has  a unique solution $u$ in $\frH^1_{2,d}(\cO,\tau)$, and for this solution $u$, we have
\begin{equation}
					\label{main est}
\|u\|_{\frH^{1}_{2,d}(\cO,\tau)} 
\leq
N ( \|f^{i}\|_{\bL_{2,d}(\cO,\tau)} +
\|\psi f\|_{\bH^{-1}_{2,d}(\cO,\tau)}
+ \|g\|_{\bL_{2,d}(\cO,\tau,\l_2)} +
 \|u_{0}\|_{U^{1}_{2,d}(\cO)}),
\end{equation}
where  $N$  depends only on $d$, $p$, $\delta_0$, $K_1$, $T$, and $\cO$.
Furthermore, 
$$
u\in  \frH^{1+\gamma}_{p,d,\loc}(\cO,\tau), \quad 
 \forall \,\, \gamma<\gamma_0,
 $$
  and   for any constants   $\kappa,\kappa_1,\gamma,\beta, \beta_1,\varepsilon$ and $\varepsilon'$ satisfying
  \begin{eqnarray*}
  1/p<\kappa_1<\kappa <1/2,  \quad 1-2\kappa-d/p>0,  \\
 1/p<\beta_1<\beta<1/2, \quad \gamma-2\beta-d/p>0,
  \end{eqnarray*}
$$
 \varepsilon \in [0, 1-2\kappa-d/p], \quad  \varepsilon' \in [0, \gamma-2\beta-d/p],
$$
it holds that for all $t<\tau$ (a.s.)
\begin{eqnarray}
          \label{holder main}
|\rho^{-\varepsilon}u|_{C^{\kappa_1-1/p}([0,t],C^{1-2\kappa-d/p-\varepsilon}(\cO))}+
|\rho^{\gamma-\varepsilon'}u_x|^p_{C^{\beta_1-1/p}([0,t],C^{\gamma-2\beta-d/p-\varepsilon'}(\cO))}<\infty.
\end{eqnarray}

\end{theorem}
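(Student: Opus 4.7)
The plan is to combine the time-local well-posedness in the $L_p$-space $\frH^{1+\gamma}_{p,d}$ established in Section 4 with an a priori global energy estimate in the weaker $L_2$-space $\frH^1_{2,d}(\cO,\tau)$. The $L_2$ estimate rules out blow-up of the local solution, so the local solution extends to all of $[0,\tau]$ as an $L_2$-solution, and the H\"older consequences (\ref{holder main}) are then immediate from Corollary \ref{hol cor}.

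First I would invoke Section 4 to produce, for every $\gamma<\gamma_0$, an increasing sequence of stopping times $\tau_n \uparrow \tau_\infty \leq \tau$ and functions $u_n \in \frH^{1+\gamma}_{p,d}(\cO,\tau_n)$ solving (\ref{main eqn}) on $\opar 0,\tau_n \cbrk$, with the dichotomy that either $\tau_\infty = \tau$ or the $\frH^1_{2,d}$-norm diverges along $\tau_n$. The main step is then to prove (\ref{main est}) a priori for any solution in $\frH^1_{2,d}(\cO,\tau_n)$. The key observation is that once a candidate solution $u$ is fixed, the coefficients $\tilde a^{ij}(t,x):=a^{ij}(t,x,u(t,x))$ (and likewise for $\tilde b^i, \tilde{\bar b}^i, \tilde c$) become bounded $\cP\times\cB(\fR^d)$-measurable functions still satisfying Assumption \ref{as el}, while the stochastic term $\nu^k u+g^k$ has no dependence on $\nabla u$, so stochastic parabolicity is automatic. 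Applying the weighted $L_2$-theory for linear divergence-type SPDEs (the ingredients of Section 3) to this frozen equation, combined with Gronwall's inequality in $t$, yields (\ref{main est}) with a constant $N$ independent of $u$. This rules out the blow-up alternative, so $\tau_\infty=\tau$ and $u:=\lim_n u_n$ is a global solution.

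Uniqueness in $\frH^1_{2,d}(\cO,\tau)$ follows by applying the same energy inequality to the difference $u-\tilde u$ of two solutions; the Lipschitz condition (Assumption \ref{as co con}), whose weights $\psi^{-1}$ and $\psi^{-2}$ are tailored to compensate for the $\psi$-powers appearing in the $\frH^1_{2,d}$-norm, turns the difference equation into an effectively linear SPDE in $u-\tilde u$ with inhomogeneity dominated by the $L_2$-norm of $u-\tilde u$. Given existence and uniqueness in $\frH^1_{2,d}(\cO,\tau)$, the higher-regularity claim $u\in\frH^{1+\gamma}_{p,d,\loc}(\cO,\tau)$ is then inherited from the local solutions $u_n$: uniqueness forces $u=u_n$ on $\opar 0,\tau_n\cbrk$, and $\tau_n\uparrow\tau$. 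Finally (\ref{holder main}) is Corollary \ref{hol cor}(ii) applied pathwise to $u$ restricted to each $\opar 0,\tau_n\cbrk$.

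The main obstacle I expect is the execution of the a priori $L_2$ estimate in the weighted space $\frH^1_{2,d}(\cO,\tau)$: the drift $\bar b^i u_{x^i}$ must be absorbed into the ellipticity after an integration by parts performed against the weight $\psi^{\theta-d}$ with $\theta=d$; the zeroth-order terms $b^i u$ and $cu$ must be matched to the weighted inner product by means of precisely the $\psi$- and $\psi^2$-weighted Lipschitz bounds of Assumption \ref{as co con} together with Hardy's inequality for $H^1_{2,d}$; and the It\^o correction from the noise $\nu^k u$ must be combined with the leading diffusion without consuming the ellipticity constant, so that a clean Gronwall closure in $\|\psi^{-1}u\|_{\bL_{2,d}}$ becomes possible. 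The $C^{1+\gamma_0}$-regularity of $\nu$ in Assumption \ref{as bd 2} is what permits the required integration by parts on the noise term.
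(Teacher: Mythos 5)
Your architecture (local solution, global $L_2$ control, continuation, H\"older via Corollary \ref{hol cor}) resembles the paper's, but the central step of your plan --- uniqueness and the a priori bound closed by a pure $L_2$ energy/Gronwall argument --- has a genuine gap. When you subtract two solutions $u,v\in\frH^1_{2,d}(\cO,\tau)$, Assumption \ref{as co con} turns the leading term into $D_i\bigl(\tilde a^{ij}(u-v)_{x^j}+\tilde b^i(u-v)\bigr)$ where $\tilde b^i$ is of size $K_3\,|v_x|$ (plus analogous gradient-dependent zeroth-order coefficients from $\bar b^i$). For a solution that is merely in $\frH^1_{2,d}$, $\psi v_x$ is \emph{not} bounded, so the energy pairing produces $\int|u-v|^2|v_x|^2\,dx$, which cannot be dominated by $\|u-v\|_{L_{2,d}}^2$; no Gronwall closure is possible, and neither Hardy's inequality nor absorbing $\bar b^i u_{x^i}$ into the ellipticity (the obstacles you anticipated) addresses this quasilinear term. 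This is exactly why the paper's Lemma \ref{uni lem} proceeds differently: it first shows $u,v$ are H\"older continuous via Theorem \ref{hol thm} (this is where $p>d+2$ enters), bootstraps to $u,v\in\frH^{1+\gamma}_{p,d}(\cO,\tau_n)$ by Theorem \ref{lin thm}, uses the embedding \eqref{embed eqn} to get $|\psi^{\gamma}Du|+|\psi^{\gamma}Dv|\leq m$ on stopping times $\tau_{n,m}$, and only then treats the difference equation as a linear SPDE whose coefficients blow up like $\rho^{-\gamma}$, $\rho^{-1-\gamma}$ but are admissible under Assumption \ref{as li bd} (cf.\ \eqref{0213 eqn 1}). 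Had an $L_2$ Gronwall uniqueness proof worked, the H\"older-in-$x$ hypotheses and $p>d+2$ would be superfluous --- a red flag for your plan. (Your a priori estimate \eqref{main est} itself, by freezing coefficients and invoking the linear $L_2$ theory, is fine and is how the paper gets it from Theorem \ref{l2 thm}.)

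Two further structural gaps. First, Section 4 does not supply the blow-up dichotomy you assert: Lemma \ref{local sol} produces a single nonzero stopping time $\tau'$ (whose size depends implicitly on the data through the auxiliary solution $u^1$), not a maximal solution with a blow-up criterion. The paper instead runs a maximality argument over $r=\sup_{\tau_a\in\Pi}\bE\tau_a$, uses the uniform $L_2$ bound plus the trace theorem (Theorem \ref{embedding}(iii)) to extend the solution to the limit time $\bar\tau$ with $u(\bar\tau,\cdot)\in U^1_{2,d}(\cO)$, and restarts the equation at $\bar\tau$ with the shifted filtration $\cF^{\bar\tau}_t$ and Wiener processes $\bar w^k_t$ to contradict maximality; note also that gluing local solutions into an increasing family already requires the uniqueness lemma, so your ordering (existence first, uniqueness after) risks circularity. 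Second, the multiplicative noise $\nu^k u$: Lemma \ref{local sol} and Theorem \ref{hol thm} are proved only for $\nu^k=0$, so you cannot ``invoke Section 4'' for the full equation. The paper removes $\nu$ by the exponential transformation $v=u\,e^{-\int_0^t\nu^k\,dW^k_s}$ in Step 2 of its proof, localized on stopping times where $|h|^{(0)}_{1+\gamma_1}\leq n$; this transformation --- not an integration by parts on the noise term in an $L_2$ estimate, where boundedness of $|\nu|_{l_2}$ already suffices (Theorem \ref{l2 thm}) --- is what the regularity $|\nu(t,\cdot)|^{(0)}_{1+\gamma_0}\leq K_2$ of Assumption \ref{as bd 2} is actually for.
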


\begin{remark}
(i) Taking $\varepsilon=0$  in (\ref{holder main}), we find that $u$ is H\"older continuous in $t$ with exponent $\kappa_1-1/p$ (which can be very close to $1/2$ if $p$ is large) and H\"older continuous in $x$ with exponent $1-2\kappa-d/p$ (this can be very close to $1$). 

(ii)  Take  $\varepsilon= 1-2\kappa-d/p$, then we get
$$
\sup_{s<\tau} |u(s,x)| \leq N  \rho^{\varepsilon}(x) \to 0
$$
substantially fast as $\rho(x)\to 0$. Hence $u$ vanishes on the boundary, and this is a reason we do not need to explicitly impose the zero boundary condition to the equation.

(iii) Since $\gamma-\varepsilon'>0$, from (\ref{holder main}) it only follows that $u_x$ is H\"older continuous in compact subsets of $\cO$.
\end{remark}

\mysection{Some  auxiliary results related to linear equations}

In this section, we collect  a few results related  to the following linear equation:
\begin{align}
du 
					\notag
&= \Big[D_i\Big(a^{ij}(t,x)u_{x^j}+b^i(t,x) u + f^i\Big)+ \bar{b}^i(t,x) u_{x^i} +c(t,x)u +f\Big]dt  \\
					&\quad +  \Big[\nu^k(t,x)u+g^k\Big] dW^k_t, \qquad t \leq \tau; \quad u(0,\cdot)=u_0.
					\label{linear eqn}
\end{align}

\begin{assumption}
                                                          \label{as li el}
  (i)  The coefficients $a^{ij}(t,x)$, $b^i(t,x)$, $\bar{b}^i(t,x)$, $c(t,x)$, and $\nu^k(t,x)$ 
are  $\cP \times \cB(\cO) $-measurable  functions.
  
  (ii) There exists a constant $\delta_0>0$ 
 such that
\begin{equation}
 \delta_0 |\xi|^2 \leq   a^{ij}\xi^i \xi^j
 \leq \delta^{-1}_0 |\xi|^2,  
\end{equation}
for all $\omega,t,x$ and $\xi\in \fR^d$. 
\end{assumption}

\begin{assumption}
               \label{as li bd}
  For all $\omega,t,x$,
  $$
\rho(x)\left[|b^i(t,x)|+|\bar{b}^i(t,x)|+|\nu(t,x)|_{l_2}\right]+\rho^2(x)|c(t,x)|\leq \delta^{-1}_0,
$$
and there is a control on the blow up of  the coefficients near the boundary:
  $$
  \rho(x)|b^i(t,x)|+\rho(x)|\bar{b}^i(t,x)|+\rho^2(x)|c(t,x)|+\rho(x)|\nu(t,x)|_{\l_2} \to 0
  $$
  as $\rho(x)\to 0$. In other words, there exists a nondecreasing function
  $\pi_0 : [0,\infty) \mapsto [0,\infty)$ such that $\pi_0(t) \downarrow 0$ as $t \downarrow 0$ and
 $$
 \rho(x)|b^i(t,x)|+\rho(x)|\bar{b}^i(t,x)|+\rho^2(x)|c(t,x)|+\rho(x)|\nu(t,x)|_{\l_2}\leq \pi_0(\rho(x)).
 $$
 \end{assumption}

\begin{remark}
Obviously Assumption \ref{as li bd}  holds if the coefficients are bounded.  It also holds if 
\begin{align}
					\label{0213 eqn 1}
|b^i(t,x)|+|\bar{b}^i(t,x)|+|\nu(t,x)|_{l_2}\leq N\rho^{-1+\varepsilon}(x), \quad |c(t,x)|\leq N\rho^{-2+\varepsilon}(x),
\end{align}
for some $\varepsilon, N>0$. Note that (\ref{0213 eqn 1}) allows the coefficients to blow up substantially fast near the boundary.

\end{remark}


\begin{assumption}
					\label{uni as}
$a^{ij}$ are uniformly continuous in $x$, 
that is, there exists a nondecreasing function $\pi_0: [0,\infty) \to [0,\infty)$ such that
\begin{align*}
|a^{ij}(t,x) - a^{ij}(t,y)| \leq \pi_0(|x-y|), \quad \forall \omega,t
\end{align*}
and $\pi_0(\lambda)  \to 0$ as $\lambda \to 0$.
\end{assumption}

Fix $\kappa_0\in (0,1)$, and for $\gamma\geq 0$, denote $\gamma_+=\gamma$ if $\gamma$ is integer and otherwise $\gamma_+=\gamma+\kappa_0$.

\begin{theorem}
                                              \label{lin thm}
Let $p\geq 2$,  $\tau \leq T$ be a stopping time, and 
\begin{align}
						\label{theta li ra}
d-1+p
< \theta < d-1+p.
\end{align}
Suppose that Assumptions  \ref{as li el}, \ref{as li bd}, and \ref{uni as} hold, and there exists a constant $\bar K$ such that
\begin{align}
					\label{gam re}
&|a^{ij}(t,\cdot)|^{(0)}_{\gamma_+}+|\psi \bar{b}^i(t,\cdot)|^{(0)}_{\gamma_+}
+|\psi \nu(t,\cdot)|^{(0)}_{\gamma_+} 
+|\psi b^i(t,\cdot)|^{(0)}_{\gamma_+}+|\psi^2 c(t,\cdot)|^{(0)}_{\gamma_+} 
\leq \bar K \quad \forall \omega, t.
\end{align}
Then  for any $f^{i}\in \bH^{\gamma}_{p,\theta}(\cO,\tau)$,
$f\in \psi^{-1} \bH^{\gamma-1}_{p,\theta}(\cO,\tau)$,
$g \in\bH^{\gamma}_{p,\theta}(\cO,\tau,l_2)$, and
$u_0 \in U^{\gamma+1}_{p,\theta}(\cO)$
 equation (\ref{linear eqn}) with  initial data $u_0$ has
a unique solution $u $ in $\frH^{\gamma+1}_{p,\theta}(\cO,\tau)$ and
\begin{equation}
                                               \label{8.17.130}
\|u\|_{\frH^{\gamma+1}_{p,\theta}(\cO,\tau)} \leq
N ( \|f^{i}\|_{\bH^{\gamma}_{p,\theta}(\cO,\tau)} +
\|\psi f\|_{\bH^{\gamma-1}_{p,\theta}(\cO,\tau)}
+ \|g\|_{\bH^{\gamma}_{p,\theta}(\cO,\tau,l_2)} +
 \|u_{0}\|_{U^{\gamma+1}_{p,\theta}(\cO)}),
\end{equation}
where $N$ depends only on $d$, $p$, $\gamma$, $\delta_0$,  $T, \cO$, and the function $\pi_0(t)$.
\end{theorem}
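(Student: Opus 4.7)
The plan is to combine the method of continuity with an a priori estimate. Since uniqueness is linear and follows from the a priori bound, and existence for the endpoint of a continuous family of operators is known, it suffices to (a) identify a base operator for which the result is available, and (b) prove (\ref{8.17.130}) uniformly along a continuous family linking the base operator to the one in \eqref{linear eqn}. Concretely, I would interpolate between the model equation $du=\Delta u\, dt + g^k dW^k_t$ and the full equation via $\lambda \in [0,1]$, multiplying all lower-order coefficients and the perturbation $a^{ij}-\delta^{ij}$ by $\lambda$.

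\textbf{Key steps.} First, I would quote the known $\frH^{\gamma+1}_{p,\theta}$-theory for the model equation on bounded $C^1$ domains, which follows from the half-space theory of \cite{krylov2001some} together with the localization to $C^1$ domains carried out in \cite{kim2004lq, lototsky2001linear}. Second, I would replace $\Delta$ by $D_i(a^{ij}D_j\cdot)$ using a freeze-the-coefficient argument on the dyadic pieces $\zeta_n$ of the partition of unity from Section~2: on $\mathrm{supp}\,\zeta_n$ the oscillation of $a^{ij}$ is bounded by $\pi_0(Ne^{-n})$ by Assumption \ref{uni as}, and summing in $n$ with weight $e^{n\theta}$ produces an error that is either absorbed (for large $n$) or dominated by a lower-order norm (for finitely many $n$). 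This is standard but needs Assumption~\ref{gam re} to ensure the product $a^{ij}(t,x)v$ stays in the right weighted H\"older$\times$Sobolev class. Third, I would fold in the lower-order terms by writing them as bounded linear operators
\begin{equation*}
T u = D_i(b^i u) + \bar b^i u_{x^i} + cu, \qquad Ru = \nu^k u,
\end{equation*}
from $\frH^{\gamma+1}_{p,\theta}(\cO,\tau)$ into $\psi^{-1}\bH^{\gamma-1}_{p,\theta}(\cO,\tau)\oplus \bH^{\gamma}_{p,\theta}(\cO,\tau,l_2)$, which is exactly where the right-hand side of \eqref{linear eqn} is allowed to live.

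\textbf{Main obstacle.} The delicate point is obtaining the a priori estimate uniformly in $\lambda$ when the lower-order coefficients blow up like $\rho^{-1}$ (for $b^i,\bar b^i,\nu$) or $\rho^{-2}$ (for $c$). The size bound in Assumption~\ref{as li bd} alone is not enough; what is crucial is the vanishing modulus $\pi_0(\rho(x))\to 0$. Given any $\varepsilon>0$, choose $\eta>0$ so that $\rho(x)|b^i(t,x)|+\rho(x)|\bar b^i(t,x)|+\rho^2(x)|c(t,x)|+\rho(x)|\nu(t,x)|_{l_2}\le \varepsilon$ whenever $\rho(x)<\eta$. Splitting the lower-order operators across $\{\rho<\eta\}\cup\{\rho\ge\eta\}$, the near-boundary part has operator norm $\le N\varepsilon$ and can be absorbed into the left-hand side of the model estimate, while the interior part is a bounded operator into a space of lower differentiability and is handled by interpolation in $H^{\gamma}_{p,\theta}(\cO)$. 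The outcome is an inequality of the form
\begin{equation*}
\|u\|_{\frH^{\gamma+1}_{p,\theta}(\cO,\tau)} \le \tfrac12 \|u\|_{\frH^{\gamma+1}_{p,\theta}(\cO,\tau)} + N\bigl(\text{data}\bigr) + N\|u\|_{\bH^{\gamma}_{p,\theta}(\cO,\tau)},
\end{equation*}
and the remaining lower norm is absorbed by a standard Gronwall/stopping-time iteration on $\tau$, exploiting part (iii) of Theorem~\ref{embedding}. With the a priori bound in hand, existence follows by the method of continuity from the base case $\lambda=0$.
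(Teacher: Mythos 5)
The paper itself offers no proof of Theorem \ref{lin thm}: it is imported wholesale from \cite[Theorem 3.13]{kim2014some}, which in turn rests on the half-space, constant-coefficient theory of Krylov--Lototsky \cite{KL2}, its extension to $C^1$ domains by localization and flattening of the boundary in \cite{Kim04}, and a final bootstrap from continuous to H\"older-continuous $a^{ij}$ in \cite{kim2014some}. So there is no in-paper argument to match; what you have written is a blind reconstruction of how that chain of references actually proceeds, and in outline it is the right one. In particular, your treatment of the singular lower-order terms --- near-boundary absorption of operator norm $\le N\varepsilon$ using the vanishing modulus $\pi_0(\rho)\to 0$ from Assumption \ref{as li bd}, interior part pushed into a lower-differentiability norm and removed by interpolation plus a Gronwall/stopping-time iteration --- is exactly the mechanism used in the cited works, and the method of continuity from the model equation is the standard closing step.

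Two points in your sketch, however, would not survive as written, and the first is a genuine gap. The freezing step: $\mathrm{supp}\,\zeta_n$ is the full boundary strip $\{c_1e^{-n}\le\psi\le c_2e^{-n}\}$, whose diameter is comparable to $\mathrm{diam}(\cO)$, not to $e^{-n}$, so the oscillation of $a^{ij}$ over $\mathrm{supp}\,\zeta_n$ is \emph{not} bounded by $\pi_0(Ne^{-n})$; Assumption \ref{uni as} gives you nothing on sets of macroscopic diameter. You need an additional tangential partition at scale $e^{-n}$ together with flattening of the boundary to reduce to the half-space model of \cite{KL2} --- this is precisely the content of \cite{Kim04}, and it is also where the weight restriction enters (note \eqref{theta li ra} carries a typo: the intended range is $d-1<\theta<d-1+p$, the admissible window for the half-space model). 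Relatedly, uniform continuity of $a^{ij}$ alone only yields solvability in $\frH^{1}_{p,\theta}(\cO,\tau)$; to obtain $\frH^{\gamma+1}_{p,\theta}(\cO,\tau)$ for non-integer $\gamma>0$ you must use the interior H\"older bound \eqref{gam re} quantitatively in the multiplier estimates of Lemma \ref{wei lem}(iv), which your sketch mentions only in passing as a membership condition. Second, a minor slip: your Gronwall step invokes Theorem \ref{embedding}(iii), which is the $p=2$ statement; for $p>2$ the needed bound on $\bE\sup_{s\le t}\|u(s)\|^p_{H^{\gamma}_{p,\theta}(\cO)}$ comes from part (ii). Both defects are repairable along the lines of \cite{Kim04, kim2014some}, but the localization as you state it would fail.
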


\begin{proof}
See \cite[Theorem 3.13]{kim2014some}. We only mention that the result of this theorem was first proved by Krylov and Lototsky \cite{KL2} when $\cO=\fR^d_+$ and the coefficients are independent of $x$. Then the result   was extended to general $C^1$-domains  in \cite{Kim04} based on   localization and  flattening the boundary arguments.  In \cite{Kim04} the coefficients $a^{ij}$ are  continuous in $x$ and consequently we only have $u\in \frH^1_{p,d}(\cO,\tau)$.  Finally better regularity of the solution is obtained in  \cite{kim2014some} under  H\"older continuity (\ref{gam re}).
 \end{proof}


\begin{corollary}[$\theta=d$ with no stochastic term]
					\label{lin cor}
Assume $\nu^k=g^k=0$ for each $k$ and  let $u$ be the solution in  Theorem \ref{lin thm} corresponding to the case $\theta=d$, that is, $u$ is the solution to 
\begin{align*}
du 
&= \Big[D_i\Big(a^{ij}(t,x)u_{x^j}+b^i(t,x) u + f^i\Big)+ \bar{b}^i(t,x) u_{x^i} +c(t,x)u +f\Big]dt,  \quad t\leq \tau, \\
&  \quad 
u(0,\cdot)=u_0.
\end{align*}
Then (a.s.)
\begin{align*}
&\|\psi^{-1} u\|_{L_p\left([0,\tau];H_{p,d}^{1 + \gamma}(\cO,\tau)\right)} +\|\psi \bD u\|_{L_p([0,\tau];H^{\gamma-1}_{p,d}(\cO)} \\
&\leq N\left( \|f^{i}\|_{L_p\left([0,\tau];H_{p,d}^{\gamma}(\cO,\tau)\right)} +
\|\psi f\|_{L_p\left([0,\tau];H_{p,d}^{\gamma -1}(\cO,\tau)\right)}
+\|u_{0}\|_{H_{p,d}^{1+\gamma-2/p}} \right),
\end{align*}
where $N=N(d,p,\gamma,\delta_0,T, \pi_0,\cO)$ is independent of $\omega$.
\end{corollary}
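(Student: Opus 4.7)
The plan is to derive the pathwise bound by reading Theorem \ref{lin thm} as a deterministic statement once the stochastic terms are switched off. With $\nu^k = g^k = 0$, for each fixed $\omega$ the equation is a deterministic divergence-form parabolic PDE on $\cO \times [0,\tau(\omega)]$, and the corollary amounts to the pathwise specialization of (\ref{8.17.130}).

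I would begin by fixing a full-measure event $\Omega' \subset \Omega$ on which $\tau \leq T$ and the three pathwise data norms on the right-hand side are finite. For each $\omega \in \Omega'$, the coefficients $a^{ij}(\omega,\cdot)$, $b^i(\omega,\cdot)$, $\bar b^i(\omega,\cdot)$, $c(\omega,\cdot)$ satisfy Assumptions \ref{as li el}, \ref{as li bd}, and \ref{uni as} together with the bound (\ref{gam re}), with the same constants $\delta_0, \pi_0, \bar K$ used in the stochastic setup, uniformly in $\omega$. I would then apply Theorem \ref{lin thm} to this deterministic equation---equivalently, on the trivial one-point probability space $(\{\omega\}, 2^{\{\omega\}}, \delta_\omega)$---to obtain (\ref{8.17.130}) at this particular $\omega$. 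Since the stochastic integrand vanishes, (\ref{8.17.130}) reduces to exactly the bound claimed in the corollary at this $\omega$, and because $\omega \in \Omega'$ is arbitrary the estimate holds a.s.

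The main technical point is confirming that the constant $N$ produced by Theorem \ref{lin thm} is genuinely deterministic, depending only on $(d,p,\gamma,\delta_0,T,\pi_0,\cO)$ and not on $(\Omega,\cF,P)$ or on $\omega$. This is built into the proof of Theorem \ref{lin thm} in \cite{kim2014some} (and its antecedents \cite{KL2, Kim04}): the $L_p(\Omega)$-integration enters only as an outer Fubini integration over a priori estimates in $(t,x)$ that are themselves $\omega$-free. Once this is granted, the specialization to frozen $\omega$ is immediate and produces the pathwise bound with a constant that is uniform across $\omega \in \Omega'$.
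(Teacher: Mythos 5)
Your proposal is correct and follows essentially the same route as the paper, whose proof is precisely the one-line observation that it suffices to fix $\omega$ and apply Theorem \ref{lin thm} to the resulting deterministic equation. Your additional remarks---that the constant $N$ from Theorem \ref{lin thm} is $\omega$-free and that the frozen-$\omega$ case can be read as the theorem on a one-point probability space---simply make explicit what the paper leaves implicit.
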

\begin{proof}
It is enough to fix $\omega$, and then apply Theorem \ref{lin thm} to the corresponding deterministic equation.
\end{proof}

\begin{theorem}[$p=2$ and $\theta=d$ with only measurable  coefficients]
                                                   \label{l2 thm}
Let $\tau\leq T$ and  Assumptions  \ref{as li el} and \ref{as li bd} hold.
Then  for any $f^{i}\in \bL_{2,d}(\cO,\tau)$,
$f\in \psi^{-1} \bH^{-1}_{2,d}(\cO,\tau)$,
$g \in\bL_{2,d}(\cO,\tau,\l_2)$, and
$u_0 \in U^{1}_{2,d}(\cO)$
 equation (\ref{linear eqn}) with initial data $u_0$
has a unique solution $u \in \frH^1_{2,d}(\cO,\tau)$, and for this solution
\begin{align*}
\|u\|_{\frH^{1}_{2,d}(\cO,\tau)} 
\leq
N ( \|f^{i}\|_{\bL_{2,d}(\cO,\tau)} +
\|\psi f\|_{\bH^{-1}_{2,d}(\cO,\tau)}
+ \|g\|_{\bL_{2,d}(\cO,\tau,\l_2)} +
 \|u_{0}\|_{U^{1}_{2,d}(\cO)}),
\end{align*}
where $N$ depends only on $\delta_0$, $T, \cO$,  and the function $\pi_0$.
\end{theorem}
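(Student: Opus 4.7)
The plan is to prove the a priori estimate (whence uniqueness) by an It\^o-energy argument, and then obtain existence by mollifying the leading coefficients $a^{ij}$ in $x$ and invoking Theorem~\ref{lin thm} for each mollification. Throughout we exploit that $\theta=d$ makes $H^0_{2,d}(\cO) = L_2(\cO)$, so the $L_2$-valued It\^o formula is directly available via Theorem~\ref{embedding}(iii), and that $u \in \frH^1_{2,d}(\cO,\tau)$ implies $\nabla u, u/\psi \in \bL_{2,d}(\cO,\tau)$ by Lemma~\ref{wei lem}(v), putting Hardy's inequality at our disposal.

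\textbf{Step 1 (Energy estimate and uniqueness).} For $u \in \frH^1_{2,d}(\cO,\tau)$, apply It\^o's formula to $e^{-\mu t}\|u(t)\|^2_{L_2(\cO)}$; the integration by parts is justified by density of $C^\infty_c(\cO)$ (Lemma~\ref{wei lem}(i)). Ellipticity yields the good term $-2\int a^{ij}u_{x^j}u_{x^i}\,dx \le -2\delta_0\|\nabla u\|^2_{L_2}$. The lower-order contributions $\int b^i u_{x^i} u\,dx$, $\int \bar b^i u_{x^i} u\,dx$, $\int c u^2\,dx$ and the stochastic quadratic variation $\int |\nu u|^2_{l_2}\,dx$ are bounded via Assumption~\ref{as li bd} together with Hardy's inequality $\|u/\rho\|_{L_2(\cO)} \le N(\cO)\|\nabla u\|_{L_2(\cO)}$. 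The decisive ingredient is the boundary decay $\pi_0(\rho(x))\to 0$: splitting $\cO = \cO_\eta\cup(\cO\setminus\cO_\eta)$ with $\cO_\eta = \{\rho<\eta\}$, the boundary strip produces a prefactor $\pi_0(\eta)$ that is absorbed into $\delta_0\|\nabla u\|^2_{L_2}$ for sufficiently small $\eta$, while on the interior piece all coefficients are uniformly bounded and contribute only an $\|u\|^2_{L_2}$ term. The data pairings $(u,D_i f^i)$, $(u,f)$, $(u,g^k)$ are handled by weighted duality (Lemma~\ref{wei lem}(iii), (v)) and Young's inequality. Taking expectations (the stochastic cross integral is a genuine martingale), choosing $\mu$ large, and applying Gronwall produces the claimed estimate and, with data set to zero, uniqueness.

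\textbf{Step 2 (Existence).} Extend $a^{ij}$ to $\fR^d$ preserving ellipticity (for instance by $\delta_0\delta^{ij}$ outside $\cO$) and let $a^{ij}_n$ be its spatial mollification; each $a^{ij}_n$ is smooth in $x$, so Assumption~\ref{uni as} is satisfied and Theorem~\ref{lin thm} (with $p=2$, $\gamma=0$, $\theta=d$) delivers a unique $u_n \in \frH^1_{2,d}(\cO,\tau)$ for the approximate equation. The Step~1 estimate is uniform in $n$, so $\{u_n\}$ is bounded in the reflexive Hilbert space $\frH^1_{2,d}(\cO,\tau)$; extract a weakly convergent subsequence $u_n \rightharpoonup u$. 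The only nontrivial point in passing to the distributional limit is the divergence term $D_i(a^{ij}_n u_{n,x^j})$: for any test $\phi \in C^\infty_c(\cO)$, $a^{ij}_n \phi \to a^{ij}\phi$ strongly in $L_2$ by dominated convergence (from $a^{ij}_n \to a^{ij}$ a.e.\ boundedly), and combined with the weak convergence $u_{n,x^j} \rightharpoonup u_{x^j}$ this yields $a^{ij}_n u_{n,x^j} \rightharpoonup a^{ij} u_{x^j}$ weakly in $\bL_{2,d}$. Hence $u$ solves the original equation, inherits the estimate, and uniqueness upgrades subsequential convergence to full convergence. The main obstacle lies in Step~1: the lower-order coefficients may blow up like $\rho^{-1}$ or $\rho^{-2}$ near $\partial\cO$, and raw Hardy alone produces a constant multiple of $\delta_0^{-1}$ that may exceed the good ellipticity factor $\delta_0$. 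The boundary-decay condition $\pi_0(\rho(x))\to 0$ is therefore indispensable, and the split must be executed in the right order — $\eta$ chosen first to control the boundary strip, $\mu$ chosen afterwards to handle the uniformly bounded interior contributions.
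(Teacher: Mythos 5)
Your proposal is correct in strategy, but note that the paper does not actually prove Theorem \ref{l2 thm}: its ``proof'' is a citation, to the classical $L_2$ theory of \cite{Ro} for bounded coefficients and to \cite[Theorem 2.19]{kim2009sobolev} for the general case with $\pi_0$-controlled blow-up. What you have written is essentially a reconstruction of that cited proof, and the heart of it is right: since $\theta=d$ makes $H^0_{2,d}(\cO)=L_2(\cO)$ and $u\in\frH^1_{2,d}$ gives $u/\psi,\nabla u\in\bL_{2,d}$ (Lemma \ref{wei lem}(v)), the energy argument with Hardy's inequality applies, and your diagnosis that raw Hardy produces a constant of order $\delta_0^{-1}$ which cannot be absorbed into the ellipticity term --- so that the decay $\pi_0(\rho)\to 0$ on a strip $\{\rho<\eta\}$, with $\eta$ fixed before $\mu$, is indispensable --- is exactly the role of Assumption \ref{as li bd}. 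Your Step 2 is also viable: for $p=2$, $\gamma=0$ one has $\gamma_+=0$, so condition \eqref{gam re} reduces to weighted boundedness (supplied by Assumptions \ref{as li el} and \ref{as li bd}), $\theta=d$ lies in the intended range of \eqref{theta li ra} (the displayed range is a typo for $d-1<\theta<d-1+p$), mollification in $x$ preserves predictability and the ellipticity constants, and your strong-times-weak argument for $a^{ij}_n u_{n,x^j}$ is the correct way to pass to the limit. One point you handle correctly but should make explicit: the constant in Theorem \ref{lin thm} depends on the modulus of continuity of $a^{ij}_n$ (through Assumption \ref{uni as}), which degenerates as $n\to\infty$, so Theorem \ref{lin thm} may only be used for \emph{existence} of $u_n$, with all uniformity coming from your Step 1 --- which is precisely how you argue.

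Three technical caveats, none fatal. First, Theorem \ref{embedding}(iii) asserts the embedding $u\in C([0,\tau];L_{2,d}(\cO))$, not the It\^o formula; the identity $d\|u\|^2_{L_2}=\bigl(2\langle u,\bD u\rangle+\|\bS u\|^2_{L_2(l_2)}\bigr)dt+2(u,\bS^k u)\,dW^k_t$ must be invoked from the Krylov--Rozovskii Gelfand-triple It\^o formula (with the pairing $\langle\cdot,\cdot\rangle$ between $\psi H^1_{2,d}$ and $\psi^{-1}H^{-1}_{2,d}$, legitimate because $\theta=d$ is the self-dual weight for $p=2$), and a localization is needed before taking expectations of the stochastic integral. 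Second, the $\frH^1_{2,d}$ norm also contains $\|\psi\bD u\|_{\bH^{-1}_{2,d}}$ and $\|\bS u\|_{\bL_{2,d}(l_2)}$; these must be read off from the equation using the boundedness of $\psi D_i:H^0_{2,d}\to H^{-1}_{2,d}$ and the weighted bounds $\rho|b^i|,\rho|\bar b^i|,\rho|\nu|_{l_2},\rho^2|c|\le\delta_0^{-1}$ --- routine given your gradient and Hardy estimates, but part of the claim. Third, a mild inelegance rather than a gap: you import the heavy H\"older-coefficient $C^1$-domain theory (Theorem \ref{lin thm}, i.e.\ \cite{kim2014some}) merely to solve the smooth-coefficient approximations, where the bounded-coefficient result of \cite{Ro} or a Galerkin scheme would suffice; within this paper's logic Theorem \ref{lin thm} is quoted as an external result, so no circularity arises. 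Compared with the paper's citation, your route is self-contained and makes transparent why $N$ depends only on $\delta_0$, $T$, $\cO$, and $\pi_0$; the paper's route is economical and, as it remarks, the classical bounded-coefficient case already covers the needs of Theorem \ref{main thm}.
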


\begin{proof}
This is a very classical result (see e.g. \cite{Ro}) if the coefficients are bounded.  See  \cite[Theorem 2.19]{kim2009sobolev} for the general case. We remark that in our main theorem, Theorem \ref{main thm}, the coefficients are assumed to be bounded. Hence,  the classical result of \cite{Ro} is enough for our need. 

\end{proof}

\begin{theorem}
                   \label{hol thm}
  Let  $u\in \frH^1_{2,d}(\cO,\tau)$ be the solution taken from Theorem  \ref{l2 thm} and assume  $\nu^k=0$ for each $k$.
Assume
\begin{equation}
                              \label{eqn bounded bc}
|b^i(t,x)| + |\overline{b}(t,x)| + |c(t,x)| \leq \bar K \quad \forall \omega,t,x,
\end{equation}
\begin{equation}
                           \label{eqn on fg}
 f^i\in \bL_{p,d}(\cO,\tau), \quad f\in \psi^{-1}\bH^{-1}_{p,d}(\cO,\tau), \quad g\in \bL_{p,d}(\cO,\tau,l_2), \nonumber
 \end{equation}
for some $p>d+2$.
  Then there exists a constant $\bar{\alpha}>0$ 
so that  if  $\alpha < \bar \alpha$ and $u_0\in L_p(\Omega,C^{\alpha}(\cO))$ then
\begin{align}
                                                     \label{5151}
\bE |u|^p_{C^\alpha ([0,\tau] \times \cO)} 
&\leq N
\|f^{i}\|^p_{\bL_{p,d}(\cO,\tau)} + N\|\psi
f\|_{\bH^{-1}_{p,d}(\cO,\tau)}   \\
&+N
\|g\|^p_{\bL_{p,d}(\cO,\tau,l_2)}+ N \bE|u_0|^p_{C^{\alpha}(\cO)},\nonumber
\end{align}
where $N$ depends only on $d$, $p, \alpha$,  $\delta_0$,  $\bar K$, $T$, and $\cO$.
\end{theorem}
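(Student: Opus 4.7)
Since $\nu^k\equiv 0$, the noise enters only additively, which is what allows a clean splitting of $u$ into a pure-noise piece and a pathwise-deterministic piece. The plan is to apply the known $\frH^1_{p,d}$-theory to a simple auxiliary SPDE that absorbs all the noise, then apply classical pathwise parabolic Hölder regularity of De Giorgi-Nash-Moser type to the remainder.

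\textbf{Step 1 (noise extraction).} Let $w$ be the unique solution in $\frH^{1}_{p,d}(\cO,\tau)$ to the additive-noise stochastic heat equation
$$dw = \Delta w\,dt + g^k\, dW^k_t,\quad t\leq \tau,\qquad w(0,\cdot)=0,$$
provided by Theorem \ref{lin thm} applied with $a^{ij}=\delta^{ij}$ and all other coefficients vanishing (so Assumptions \ref{as li el}, \ref{as li bd}, \ref{uni as}, and the $\gamma_+$ bound (\ref{gam re}) are trivially satisfied). Taking $\gamma=0$, $\theta=d$, this yields
$$\|w\|_{\frH^{1}_{p,d}(\cO,\tau)}\leq N\|g\|_{\bL_{p,d}(\cO,\tau,l_2)}.$$
Since $p>d+2$, Corollary \ref{hol cor} embeds $\frH^{1}_{p,d}(\cO,\tau)$ into $C^{\alpha_0}([0,\tau]\times\cO)$ for some $\alpha_0>0$, and therefore
$$\bE|w|^p_{C^{\alpha_0}([0,\tau]\times\cO)}\leq N\|g\|^p_{\bL_{p,d}(\cO,\tau,l_2)}.$$

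\textbf{Step 2 (reduction to a pathwise PDE).} Set $v:=u-w$. Then $v\in\frH^{1}_{2,d}(\cO,\tau)$ and, pathwise in $\omega$, $v$ solves the deterministic divergence-form parabolic equation
$$v_t = D_i\bigl(a^{ij}v_{x^j}+b^{i}v+F^{i}\bigr)+\bar b^{i}v_{x^i}+cv+F,\qquad v(0,\cdot)=u_0,$$
where $F^{i}:=f^{i}+(a^{ij}-\delta^{ij})w_{x^j}+b^{i}w$ and $F:=f+\bar b^{i}w_{x^i}+cw$. By Step 1 and hypothesis (\ref{eqn on fg}), the modified forcing terms lie in $\bL_{p,d}(\cO,\tau)$ and $\psi^{-1}\bH^{-1}_{p,d}(\cO,\tau)$ respectively, with their $L_p(\Omega)$-norms dominated by the right-hand side of (\ref{5151}).

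\textbf{Step 3 (pathwise parabolic Hölder regularity).} For a.e.\ $\omega$, $v(\omega,\cdot,\cdot)$ solves a deterministic divergence-form parabolic equation with uniformly elliptic bounded measurable principal part (Assumption \ref{as li el}), bounded lower order coefficients (by (\ref{eqn bounded bc})), and $L_p$ inhomogeneity with $p>d+2$. The classical De Giorgi-Nash-Moser theorem for parabolic equations in divergence form (Aronson-Serrin, Ladyzhenskaya-Solonnikov-Uraltseva) yields a universal exponent $\bar\alpha=\bar\alpha(d,p,\delta_0,\bar K)>0$ such that for any $\alpha<\bar\alpha$,
$$|v|_{C^{\alpha}([0,\tau]\times\cO)}\leq N\Bigl(|u_0|_{C^{\alpha}(\cO)}+\|F^{i}\|_{\bL_{p,d}(\cO,\tau)}+\|\psi F\|_{\bH^{-1}_{p,d}(\cO,\tau)}\Bigr)\quad(\text{a.s.}).$$
The zero boundary trace of $v$ required for the global (rather than merely interior) estimate is inherited from the weighted framework: membership in $\frH^{1}_{2,d}(\cO,\tau)$ encodes the appropriate vanishing of $v$ on $\partial\cO$ through the weight $\psi$. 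Raising to the $p$-th power, taking expectations, combining with the Step 1 bound on $w$, and using $u=v+w$ yields (\ref{5151}) with the exponent replaced by $\min(\bar\alpha,\alpha_0)$.

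\textbf{Main obstacle.} The crux is running a pathwise De Giorgi-Nash-Moser iteration on $v$ consistently with the weighted framework $\frH^{1}_{2,d}$. The classical parabolic Hölder estimates are usually formulated with unweighted Dirichlet data or on interior subdomains; careful translation via Hardy-type inequalities is needed to transfer the weighted $\bL_{p,d}$ and $\psi^{-1}\bH^{-1}_{p,d}$ bounds on the forcing into the unweighted $L_p$-Morrey bounds that feed De Giorgi's truncation scheme. The assumption $p>d+2$ is precisely the threshold ensuring the resulting Morrey-Campanato scaling exponent is strictly positive, which is what produces genuine Hölder regularity of $v$ with an exponent $\bar\alpha$ uniform in $\omega$.
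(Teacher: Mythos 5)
Your overall strategy --- split off an additive-noise auxiliary SPDE with constant coefficients, then run a pathwise De Giorgi--Nash--Moser/LSU H\"older estimate on the remainder, which has only measurable leading coefficients --- is exactly the paper's strategy. But there is a genuine gap in where you cut: you put only the noise into the auxiliary equation $dw=\Delta w\,dt+g^k\,dW^k_t$, so the deterministic free term $f$ survives into the pathwise remainder equation, where it appears inside $F=f+\bar b^i w_{x^i}+cw$. Since $f$ is only assumed to lie in $\psi^{-1}\bH^{-1}_{p,d}(\cO,\tau)$, it is a weighted negative-order distribution and need not be a function at all; the classical Aronson--Serrin/LSU H\"older theory for divergence-form parabolic equations with measurable coefficients requires the free terms to be functions ($F^i\in L_p$ with $p>d+2$, or a zero-order term in $L_q$ with $q>(d+2)/2$). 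Your Step 3 estimate, with $\|\psi F\|_{\bH^{-1}_{p,d}}$ on the right-hand side of a deterministic H\"older bound, is not a statement any classical result provides, and no Hardy-type inequality can repair this: Hardy inequalities control weighted norms of functions vanishing at the boundary, but they cannot upgrade a negative-order distribution into an $L_q$ function. Your own ``Main obstacle'' paragraph flags this friction but does not resolve it.

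The fix is the paper's choice of decomposition: solve instead
\begin{equation*}
dv=(\Delta v+f)\,dt+g^k\,dW^k_t,\qquad v(0,\cdot)=0,
\end{equation*}
absorbing $f$ into the constant-coefficient SPDE, for which Theorem \ref{lin thm} (with Corollary \ref{lin cor}, $\gamma=0$) accepts $f\in\psi^{-1}\bH^{-1}_{p,d}(\cO,\tau)$ and yields both $\bE|v|^p_{C^{\alpha_1}([0,\tau]\times\cO)}$ and $\bE\|\nabla v\|^p_{L_p([0,\tau]\times\cO)}$ bounded by $N(\|\psi f\|^p_{\bH^{-1}_{p,d}(\cO,\tau)}+\|g\|^p_{\bL_{p,d}(\cO,\tau,l_2)})$. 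The remainder $\bar u:=u-v$ then solves, pathwise, a divergence-form equation whose data consist solely of $\bar f^i=(a^{ij}-\delta^{ij})v_{x^j}+f^i\in L_p([0,\tau]\times\cO)$ (plus the bounded lower-order terms), which is precisely the setting of the classical theory, giving $|\bar u|_{C^{\alpha_2}}\leq N\|\bar f^i\|_{L_p}$ with $N$ independent of $\omega$; one concludes with $\bar\alpha=\alpha_1\wedge\alpha_2$. Note also that the constant-coefficient auxiliary equation is forced on you for a second reason you implicitly used correctly: $a^{ij}$ is only measurable here (Theorem \ref{l2 thm} requires no continuity), so Theorem \ref{lin thm} cannot be applied to any equation carrying $a^{ij}$, and the measurable coefficients must land in the pathwise De Giorgi--Nash--Moser step. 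Your Step 1 and the structure of Steps 2--3 are otherwise sound, including the unweighted $L_p$ bound on $\nabla w$ extracted from $\frH^1_{p,d}$.
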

\begin{proof}
See \cite[Theorem 2.9]{kim2014some} or \cite[Theorem 2.4]{kim2004lq} for detailed proof.  Below we only give a skecth of the proof. 
For simplicity assume $b^i=\bar{b}^i=c=u_0=0$.  By Theorem \ref{lin thm}, there is a unique solution $v\in \frH^1_{p,d}(\cO,\tau)$  to
$$
dv=(\Delta v +f)dt +g^k dW^k_t, \quad v(0)=0.
$$
By  (\ref{8.17.130}) and Corollary \ref{lin cor} with $\gamma=0$, there is  $\alpha_1>0$ so that 
$$
\bE \|\nabla v\|^p_{L_p([0,\tau]\times \cO)}+ \bE |v|^p_{C^{\alpha_1}([0,\tau]\times \cO)}\leq N ( \|\psi f\|^p_{\bH^{-1}_{p,d}(\cO,\tau)}+\|g\|^p_{\bL_{p,d}(\cO,\tau,l_2)}).
$$
Note that, for each fixed $\omega$, the function $\bar{u}:=u-v$ satisfies the deterministic equation
$$
\frac{d\bar{u}}{dt}=D_i(a^{ij}\bar{u}_{x^i}+\bar{f^i}), \quad \bar{u}(0,\cdot)=0,
$$
where $\bar{f}^i=(a^{ij}-\delta^{ij})v_{x^j}+f^i$. Here $\delta^{ij}$ is the Kronecker delta, i.e. $\delta^{ij}=1$ if $i = j$ and otherwise $\delta^{ij}=0$. Then using a classical result for the deterministic equation (e.g. \cite{LSU}), for some $\alpha_2>0$ we have
$$
|\bar{u}|_{C^{\alpha_2}([0,\tau]\times \cO)} \leq N \|\bar{f}^i\|_{L_p([0,\tau]\times \cO)},
$$
where $N$ is independent of $\omega\in \Omega$.  Combining above two estimates we get (\ref{5151}) with $\bar{\alpha}=\alpha_1 \wedge \alpha_2$.
\end{proof}

\mysection{local solution}

In this section, we construct a nonzero stopping time $\tau' \leq \tau$ so that the equation
\begin{align}
du = \Big(D_i \Big[a^{ij}(u)u_{x^j}+b^i(u) u + f^i\Big] + \bar{b}^i(u) u_{x^i} +c(u)u +f\Big)dt+g^k dW_t^k
					\label{nu zero}
\end{align}
has a solution for $t\leq \tau'$.
\begin{lemma}
					\label{local sol}
Suppose that Assumptions \ref{as mea}-\ref{as co con} hold
and assume that $\tau$ is a nonzero stopping time.
Let $f^i\in \bL_{p,d}(\cO,\tau)$, $f\in \bH^{-1}_{p,d}(\cO,\tau)$, $g\in \bL_{p,d}(\cO,\tau,l_2)$ and $u_0\in U^1_{p,d}(\cO)$.  Then there exist a nonzero stopping time $\tau'\leq \tau$ 
such that equation (\ref{nu zero}) with initial data $u_0$ has a  solution $u$ in $\frH^{1}_{p,d}(\cO,\tau')$.
\end{lemma}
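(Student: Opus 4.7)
The plan is to construct a local solution by a Banach contraction on a short stopped interval. Given a stopping time $\tau'\leq\tau$, and for each $v\in\frH^1_{p,d}(\cO,\tau')$ with $v(0,\cdot)=u_0$, define $\Phi(v)=u$ to be the unique element of $\frH^1_{p,d}(\cO,\tau')$ solving the linearized SPDE obtained by freezing the nonlinear arguments at $v$:
\begin{equation*}
du=\bigl[D_i\bigl(a^{ij}(t,x,v)u_{x^j}+b^i(t,x,v)u+f^i\bigr)+\bar b^i(t,x,v)u_{x^i}+c(t,x,v)u+f\bigr]dt+g^k dW^k_t,
\end{equation*}
with $u(0,\cdot)=u_0$. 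Any fixed point of $\Phi$ yields a solution of (\ref{nu zero}) on $[0,\tau']$.

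Well-definedness of $\Phi$ follows from Theorem~\ref{lin thm} with $\gamma=0$, $\theta=d$ applied to the frozen equation. Ellipticity and boundedness come from Assumption~\ref{as el}; Assumption~\ref{as li bd} is immediate on the bounded domain $\cO$. The delicate hypothesis is the interior H\"older bound (\ref{gam re}). Writing
\begin{equation*}
a^{ij}(t,x,v(t,x))-a^{ij}(t,y,v(t,y))=\bigl[a^{ij}(t,x,v(t,x))-a^{ij}(t,y,v(t,x))\bigr]+\bigl[a^{ij}(t,y,v(t,x))-a^{ij}(t,y,v(t,y))\bigr],
\end{equation*}
the first bracket is controlled by Assumption~\ref{as bd 2} and the second by Assumption~\ref{as co con} together with the spatial H\"older regularity of $v$ furnished by Corollary~\ref{hol cor}. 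The weights $\psi,\psi^2$ in Assumption~\ref{as co con} match precisely the weighted norms $|\psi b^i|^{(0)}_{\gamma_+}$, $|\psi^2 c|^{(0)}_{\gamma_+}$ required by (\ref{gam re}); an analogous argument handles $b^i(v),\bar b^i(v),c(v)$.

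For the contraction, fix a reference $U\in\frH^1_{p,d}(\cO,\tau)$ with $U(0,\cdot)=u_0$ (for instance, the solution of a deterministic-coefficient linear equation with the given data, obtained via Theorem~\ref{lin thm}), and work in the closed ball $\{v:v(0,\cdot)=u_0,\,\|v-U\|_{\frH^1_{p,d}(\cO,\tau')}\leq M\}$. For $v_1,v_2$ in this ball, set $u_i=\Phi(v_i)$ and $w=u_1-u_2$; then $w(0,\cdot)=0$, and $w$ solves a linear SPDE with principal part $D_i(a^{ij}(v_1)w_{x^j})$ and inhomogeneity of the form $(a^{ij}(v_1)-a^{ij}(v_2))(u_2)_{x^j}$ plus analogous lower-order terms. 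Theorem~\ref{lin thm} applied to $w$ together with Assumption~\ref{as co con} yields
\begin{equation*}
\|w\|_{\frH^1_{p,d}(\cO,\tau')}\leq N\,|v_1-v_2|_{C([0,\tau']\times\cO)}\bigl(\|\nabla u_2\|_{\bL_{p,d}(\cO,\tau')}+\text{lower order}\bigr).
\end{equation*}
Since $(v_1-v_2)(0,\cdot)\equiv 0$, Corollary~\ref{hol cor} gives, for any $\kappa_1\in(1/p,1/2)$,
\begin{equation*}
|v_1-v_2|_{C([0,\tau']\times\cO)}\leq(\tau')^{\kappa_1-1/p}|v_1-v_2|_{C^{\kappa_1-1/p}([0,\tau'];C(\cO))}\leq N(\tau')^{\kappa_1-1/p}\|v_1-v_2\|_{\frH^1_{p,d}(\cO,\tau')}.
\end{equation*}
Combined with the uniform $\|\nabla u_2\|_{\bL_{p,d}}$-bound inside the ball, this produces a contraction factor of order $(\tau')^{\kappa_1-1/p}$, strictly less than $1$ once $\tau'$ is small. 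A parallel, easier, computation shows $\Phi$ maps the ball into itself for $\tau'$ small.

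The main obstacle is that the H\"older bound on $a^{ij}(v)$ coming from the above decomposition depends on the $C^\alpha$-norm of $v$, which is random, whereas Theorem~\ref{lin thm} as stated requires a deterministic coefficient bound; similarly, converting the pathwise $C$-norm in the contraction inequality into a genuine $\bL_{p,d}$-factor requires pathwise-versus-$L_p(\Omega)$ compatibility. Both are handled by a stopping-time localization: for $\alpha\in(0,1-(d+2)/p)$ (positive since $p>d+2$), let $\tau^R=\inf\{t:|U(t,\cdot)|_{C^\alpha(\cO)}\geq R\}\wedge\tau$ and take $\tau'=\tau^R\wedge\tau^*$ with $R$ suitably large and $\tau^*$ a small deterministic time. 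Restricting to a small ball around $U$ then gives $|v|_{C^\alpha}\leq R+1$ uniformly in $(\omega,t)$ on $[0,\tau']$, supplying the deterministic bounds required to close both steps.
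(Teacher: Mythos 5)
There is a genuine gap, and it sits exactly where you flag the ``main obstacle'': your localization does not actually deliver the pathwise bounds your argument needs. Membership in the ball $\{v:\|v-U\|_{\frH^1_{p,d}(\cO,\tau')}\leq M\}$ is an \emph{expectation}-norm constraint, so it gives no control of $|v(\omega)|_{C^\alpha}$ uniformly in $\omega$; your stopping time $\tau^R$ controls only the reference $U$, not the competitors $v$ nor the images $\Phi(v)$. Consequently the claim ``$|v|_{C^\alpha}\leq R+1$ uniformly in $(\omega,t)$ on $[0,\tau']$'' does not follow, and two steps break: (a) Theorem \ref{lin thm} requires the uniform continuity of $a^{ij}(t,\cdot,v(t,\cdot))$ and the bound \eqref{gam re} to hold \emph{for all} $\omega,t$ with a deterministic constant, which you cannot verify on your ball; (b) in the contraction estimate the free term $(a^{ij}(v_1)-a^{ij}(v_2))(u_2)_{x^j}$ must be measured in $\bL_{p,d}(\cO,\tau')$, i.e.\ you must bound $\bE\int_0^{\tau'}|v_1-v_2|^p_{C(\cO)}\|(u_2)_x\|^p_{L_{p,d}(\cO)}\,dt$, which couples the sup of one random factor with the $L_p$-in-time norm of another; without an a.s.\ bound on $\|(u_2)_x\|_{L_p([0,\tau'];L_{p,d}(\cO))}$ (again not available from an $\frH^1_{p,d}$-ball) the factor $(\tau')^{\kappa_1-1/p}$ cannot be extracted. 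The paper's proof is built precisely to avoid this: the fixed-point class $\Phi(\tau')$ is defined by \emph{almost-sure} constraints ($\|\psi^{-1}(u-u^1)\|_{L_p([0,\tau'];H^{1+\gamma_2}_{p,d}(\cO))}\leq 1$ and sup-norm bounds a.s.), the reference $u^1$ is stopped so that $\|\psi^{-1}u^1\|_{L_p([0,t];H^{1+\gamma_0}_{p,d}(\cO))}\leq\varepsilon$ pathwise, and — crucially — the differences $\cR v-u^1$ and $\cR v-\cR w$ satisfy \emph{deterministic} equations for each fixed $\omega$ because the stochastic term $g^k\,dW^k_t$ cancels, so the $\omega$-wise estimate of Corollary \ref{lin cor} applies and the a.s.\ constraints propagate. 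Your setup actually has the same cancellation available (both $\Phi(v_1)$ and $\Phi(v_2)$ carry the same $g^k\,dW^k_t$), but you never invoke it, and without it plus a pathwise-defined ball the self-map and contraction steps cannot be closed.

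A secondary but substantive difference: your source of smallness is the short time horizon, via $|v_1-v_2|_{C([0,\tau']\times\cO)}\leq(\tau')^{\kappa_1-1/p}|v_1-v_2|_{C^{\kappa_1-1/p}([0,\tau'];C(\cO))}$ (legitimate since $(v_1-v_2)(0,\cdot)=0$), whereas the paper's contraction factor is $NK_3$, with smallness of $K_3$ \emph{not} assumed but manufactured by the scaling $u\mapsto u/\delta$ in Step 2 (which shrinks the Lipschitz constant of $u\mapsto a^{ij}(t,x,u)$ while preserving all other constants), combined with taking $\varepsilon$ and the H\"older-splitting parameter $\delta$ small in the quantity $I(\delta,K_3)$. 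Your time-smallness mechanism is plausible and would let you skip the scaling step, but note it is not free: the constant $N$ in Theorem \ref{lin thm} is only known to be uniform over $\tau'\leq T$, so the entire gain must come from the interpolation factor, which — as explained above — only survives the passage to $\bL_{p,d}$ norms after the pathwise repair. Also, the paper freezes all coefficients at the single function $u^1$ and treats $[a^{ij}(v)-a^{ij}(u^1)]v_{x^j}$ etc.\ as free terms, so the hypotheses of Theorem \ref{lin thm} are checked once and for all; freezing at each $v$, as you do, forces you to verify \eqref{gam re} uniformly over the ball, which again requires the a.s.\ constraints you have not imposed.
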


\begin{proof}
{\bf Step 1.} We prove the lemma if  $K_3$ is  sufficiently close to zero.

%
%

\vspace{2mm}

Let $u^1  \in \frH_{p,d}^{1+\gamma_0}(\cO, \tau)$ be the solution of the following equation:
\begin{align}
					\label{eq u1}
du&= \Big[D_i\Big(\delta^{ij}u_{x^j}+  f^i\Big) +f\Big]dt +  g^k dW_t^k,  \qquad t\leq \tau, 
 \quad   u(0, \cdot)=u_0.
\end{align}
By  the assumption $\gamma_0 \in \left(\frac{d+2}{p},1\right)$, one can choose $\gamma_1$ and $\gamma_2$ so that
\begin{align}
					\label{0211 eqn 1}
\frac{d+2}{p}<\gamma_2 < \gamma_1< \gamma_0< 1.
\end{align}
Since $p>d+2$,
$$
\hat{\alpha}=\hat{\alpha}(p):=1-d/p-2/p>0.
$$
Let  $\alpha \in (0,\hat \alpha)$.  Then
by \eqref{eqn 10.20} and Corollary \ref{hol cor},
$$
\bE |u^1|^p_{C([0,\tau]; C^{\alpha}(\cO))} + \bE | \psi Du^1 |^p_{C([0,\tau]\times \cO)}  
\leq N\|u^1\|^p_{\frH_{p,d}^{1+\gamma_0}(\cO, \tau)} < \infty.
$$
Denote
\begin{align*}
A_n:=&\left\{\omega\in \Omega: |u^1|_{C([0,\tau]; C^{\alpha}(\cO))} + | \psi Du^1 |_{C([0,\tau]\times \cO)} < n \right\}.
\end{align*}
Then $P(\cup_{n=1}^\infty A_n)=1$, and thus
we can fix $n_0 \in \bN$ such that 
\begin{align*}
&P\left( \{ \tau \neq 0 \} \cap A_{n_0} \right) > 0.
\end{align*}
Define
\begin{align*}
\tau^{''}:=\inf\left\{ t \leq \tau : |u^1|_{C([0,t]; C^{\alpha}(\cO))}   + | \psi Du^1 |_{C([0,t]\times \cO)}
\geq  n_0 \right\},
\end{align*}
\begin{align*}
\tau^{'''}:= \inf\{ t \leq \tau : \|\psi^{-1}u^1\|_{L_p([0,t] ; H^{1+\gamma_0}_{p,d}(\cO))} > \varepsilon \},
\end{align*}
and
\begin{align}
\tau' := \tau^{''} \wedge \tau^{'''},
\end{align}
where $\varepsilon \in (0,1)$  will be specified later.  
It is obvious that $\tau'''>0$ (a.s.) and  $\tau'$ is a nonzero stopping time. The latter is because $\tau''=\tau$ on $A_{n_0}$.

Denote
\begin{align*}
\Phi(\tau')
&:= \Big\{ u \in \frH_{p,d}^{1 + \gamma_2}(\cO, \tau') :
\|\psi^{-1}(u- u^1)\|_{L_p\left([0,\tau'];H_{p,d}^{1 + \gamma_2}(\cO,\tau')\right)} \leq 1 \,\,(a.s.), \\
& \quad \quad \,
 |u-u^1 |_{C([0,\tau']\times \cO)}  + | \psi D(u-u^1) |_{C([0,\tau']\times \cO)} 
 \leq 1 \,\,(a.s.),~u(0,\cdot)=u_0
 \Big\}.
\end{align*}
For each $v \in \Phi(\tau')$, by $\cR v$ we denote the solution in $\frH^1_{p, d}(\cO,\tau')$ 
to  the equation 
\begin{align*}
du 
&= \Big[ D_i\Big(a^{ij}(u^1)u_{x^j} + b^i(u^1)u+[a^{ij}(v)-a^{ij}(u^1)]v_{x^j}+ [b^i(v)-b^i(u^1)]v+f^i\Big) \\
&\quad \quad + \bar b^i(u^1)u_{x^i}+ c(u^1)u + [\bar b^i (v)- \bar b^i (u^1)]v_{x^i}+[c(v) -c(u^1)]v + f\Big]dt  \\
&\quad +  g^k W_t^k, \qquad  t \leq \tau';   \quad u(0,\cdot)=u_0.
\end{align*}
 The map $v\to \cR v \in \frH^1_{p,d}(\cO,\tau')$ is well-defined due to Theorem \ref{lin thm} since $a^{ij}(u^1)$ is uniformly continuous in $x$ (uniformly in $(\omega,t)$) and other coefficients are assumed to be bounded.  Indeed,
\begin{eqnarray*}
&& |a^{ij}(t,x,u^1(t,x))-a^{ij}(t,y,u^1(t,y))|\\
&&\leq K_2|x-y|^{\gamma_0} + K_3 |u^1(t,x)-u^1(t,y)|\leq K_2|x-y|^{\gamma_0}+K_3n_0|x-y|^{\alpha}, \quad t\leq \tau'.
\end{eqnarray*}

To check  $\cR v \in \frH^{1+\gamma_2}_{p,d}(\cO,\tau')$, first note that by  \eqref{0222 eqn 1}, \eqref{0213 eqn 1}, and Assumption \ref{as bd 2},
\begin{align*}
&\sup_{t \leq \tau}|a(t,\cdot,u^1(\cdot))|^{(0)}_{\gamma_1} 
+\sup_{t \leq \tau}|\psi b^i(t,\cdot,u^1(\cdot))|^{(0)}_{\gamma_1} \\
&\quad + \sup_{t \leq \tau}|\psi \bar b^i(t,\cdot,u^1(\cdot))|^{(0)}_{\gamma_1}
+\sup_{t \leq \tau}|\psi^2 c(t,\cdot,u^1(\cdot))|^{(0)}_{\gamma_1} 
\leq N n_0,
\end{align*}
where $N$ is a constant depending only on $d$,  $K_1$, and $K_2$. Thus by Theorem \ref{lin thm}  we conclude $\cR  v\in \frH^{1+\gamma_2}_{p,d}(\cO,\tau')$.  Here we used the existence result in $\frH^{1+\gamma_2}_{p,d}(\cO,\tau')$ and the uniqueness result in $\frH^1_{p,d}(\cO,\tau')$.

Next we show $\cR v\in \Phi(\tau')$ if $K_3$ is sufficiently small.  
Note that $(\cR v-u^1)(0,\cdot)=0$ and
\begin{align*}
d(\cR v - u^1) 
&=\Big[D_i\Big(a^{ij}(u^1)(\cR v - u^1)_{x^j} + b^i(u^1)(\cR v - u^1)+\tilde f^i\Big) \\
&\quad \,\, + \bar b^i (u^1) ( \cR v - u^1)_{x^i} + c(u^1)(\cR v - u^1) + \tilde f\Big]dt, \quad  \quad t\leq \tau',
\end{align*}
where
\begin{align*}
\tilde f^i 
&:= [a^{ij}(v)-a^{ij}(u^1)]v_{x^j} + [b^i(v)-b^i(u^1)]v+ a^{ij}(u^1)u^1_{x^j}+b (u^1) u^1 -\delta^{ij}u^1_{x^j}\\
&= [a^{ij}(v)-a^{ij}(u^1)](v_{x^j}-u^1_{x^j})+[a^{ij}(v)-a^{ij}(u^1)]u^1_{x^j} + a^{ij}(u^1)u^1_{x^j}\\
&\quad  + [b^i(v)-b^i(u^1)](v-u^1)+  [b (v)-b^i(u^1)] u^1 
  + b^i(u^1) u^1  - \delta^{ij}u^1_{x^j}
\end{align*}
and
\begin{align*}
\tilde f 
&:= [\bar b^i (v)- \bar b^i (u^1)]v_{x^i} +[c(v) -c(u^1)]v +\bar b^i(u^1){u^1_{x^i}}+ c(u^1)u^1 \\ 
&= [\bar b^i (v)- \bar b^i (u^1)](v-u^1)_{x^i} +[c(v) -c(u^1)](v-u^1) \\
&\quad +[\bar b^i (v)- \bar b^i (u^1)]u^1_{x^i}+[c(v) -c(u^1)]u^1+c(u^1)u^1+\bar b^i(u^1)u^1_{x^i}+c(u^1)u^1.
\end{align*}

\noindent
 Using the deterministic version of   \eqref{embed eqn} with $\gamma=\gamma_2$ and Corollary \ref{lin cor},  we get (a.s.)
\begin{align}
 \nonumber
&\|(\cR v-u^1)\|_{C\left([0,\tau']\times \cO \right) }
+\|\psi (\cR v-u^1)_x\|_{C\left([0,\tau']\times \cO \right) } \\
\nonumber
&+\|\psi^{-1}(\cR v- u^1)\|_{L_p\left([0,\tau'];H_{p,d}^{1 + \gamma_2}(\cO,\tau')\right)}\\
&\leq N \left( \|\tilde f^{i}\|_{L_p\left([0,\tau'];H_{p,d}^{\gamma_2}(\cO,\tau')\right)} 
+\|\psi \tilde f\|_{L_p\left([0,\tau'];H_{p,d}^{-1+ \gamma_2}(\cO,\tau')\right)} \right), \label{eqn 3.15.1}
\end{align}
where $N=N(n_0,d,p,\delta_0,T,K_1,K_2)$.   

To estimate $\tilde{f}^i$ and $\psi \tilde{f}$ in (\ref{eqn 3.15.1}), we show that 
 for any $\delta>0$,
\begin{align}
 \label{eqn 3.15.2}
|a^{ij}(v)-a^{ij}(u^1)|^{(0)}_{\gamma_1}
\leq  N(d,\cO)\left(K_3(1+  \delta^{-\gamma_1} + n_0) +K_2 \delta^{\gamma_0 -\gamma_1} \right).
\end{align}
First observe
$$
 |a^{ij}(v)-a^{ij}(u^1)|\leq K_3|v-u^1|\leq K_3, \quad \quad t\leq \tau'.
 $$
 If $|x-y|\geq \delta$, 
\begin{align*}
&\rho^{\gamma_1}(x,y)\frac{|a^{ij}(x,v(x)) -a^{ij}(x,u^1(x))-\left(a^{ij}(y,v(y)) -a^{ij}(y,u^1(y))\right)|}{|x-y|^{\gamma_1}} \\
&\leq \rho^{\gamma_1}(x,y)\frac{|a^{ij}(x,v(x)) -a^{ij}(x,u^1(x))|}{|x-y|^{\gamma_1}}
+\rho^{\gamma_1}(x,y)\frac{|a^{ij}(y,v(y)) -a^{ij}(y,u^1(y))|}{|x-y|^{\gamma_1}} \\
&\leq 2NK_3|v-u^1|_{C(\cO)}\frac{1}{|x-y|^{\gamma_1}} 
\leq 2NK_3\delta^{-\gamma_1},
\end{align*}
and if $|x-y|\leq \delta$
\begin{align*}
&\rho^{\gamma_1}(x,y)\frac{|a^{ij}(x,v(x)) -a^{ij}(x,u^1(x))-\left(a^{ij}(y,v(y)) -a^{ij}(y,u^1(y))\right)|}{|x-y|^{\gamma_1}} \\
&\leq \rho^{\gamma_1}(x,y)\left|\frac{a^{ij}(x,v(x)) -a^{ij}(x,v(y))}{|x-y|^{\gamma_1}}
-\frac{a^{ij}(x,u^1(x)) -a^{ij}(x,u^1(y))}{|x-y|^{\gamma_1}} \right|\\
&\quad+ \rho^{\gamma_1}(x,y)\left|\frac{a^{ij}(x,v(y)) -a^{ij}(y,v(y))}{|x-y|^{\gamma_1}}
-\frac{a^{ij}(x,u^1(y)) -a^{ij}(y,u^1(y))}{|x-y|^{\gamma_1}} \right|\\
&\leq K_3( [v]^{(0)}_{\gamma_1}+[u^1]^{(0)}_{\gamma_1}) +NK_2 |x-y|^{\gamma_0 -\gamma_1} \\
&\leq K_3( [v]^{(0)}_{\gamma_1}+[u^1]^{(0)}_{\gamma_1}) +NK_2 \delta^{\gamma_0 -\gamma_1}.
\end{align*}
Hence (\ref{eqn 3.15.2}) is proved. Similarly, for any $\delta>0$,
\begin{align*}
\left|\psi [b^i(v)-  b^i(u^1)] \right|^{(0)}_{\gamma_1}
&\leq  N\left(K_3(1+ \delta^{-\gamma_1}+ n_0) +K_2 \delta^{\gamma_0 -\gamma_1} \right)=: N I(\delta,K_3).
\end{align*}
Therefore by Lemma \ref{wei lem}(iv), 
\begin{eqnarray*}
&&\|(a^{ij}(v)-a^{ij}(u^1))\cdot (v_{x^j}-{u^1_{x^j}})\|_{L_p\left([0,\tau'];H_{p,d}^{\gamma_2}(\cO,\tau')\right)}\\
&&\leq N I(\delta,K_3)\|(v-u^1)_x\|_{L_p\left([0,\tau'];H_{p,d}^{\gamma_2}(\cO,\tau')\right)}\\
&&\leq 
N I(\delta,K_3)\|\psi^{-1}(v-u^1)\|_{L_p\left([0,\tau'];H_{p,d}^{1+\gamma_2}(\cO,\tau')\right)} \leq NI(\delta,K_3).
\end{eqnarray*}
Similarly,
\begin{eqnarray*}
&&\|[a^{ij}(v)-a^{ij}(u^1)]{u^1_{x^j}}\|_{L_p\left([0,\tau'];H_{p,d}^{\gamma_2}(\cO,\tau')\right)}
\leq NI(\delta,K_3)
\|u^1_x\|_{L_p\left([0,\tau'];H_{p,d}^{\gamma_2}(\cO,\tau')\right)}\\
&& \leq NI(\delta,K_3)\|\psi^{-1}u^1\|_{L_p\left([0,\tau'];H_{p,d}^{1+\gamma_0}(\cO,\tau')\right)}\leq N I(\delta,K_3) \varepsilon.
\end{eqnarray*}
In this way, we get
\begin{align*}
 &\|\tilde f^{i}\|_{L_p\left([0,\tau'];H_{p,d}^{\gamma_2}(\cO,\tau')\right)} \\
 &\leq\left\|[a^{ij}(v)-a^{ij}(u^1)](v_{x^j}-{u^1_{x^j}})\right\|_{L_p\left([0,\tau'];H_{p,d}^{\gamma_2}(\cO,\tau')\right)}\\
&\quad +\left\|[a^{ij}(v)-a^{ij}(u^1)]{u^1_{x^j}}\right\|_{L_p\left([0,\tau'];H_{p,d}^{\gamma_2}(\cO,\tau')\right)}\\
&\quad + \left\|\psi[b^i(v)-b^i(u^1)]\psi^{-1}(v-u^1)\right\|_{L_p\left([0,\tau'];H_{p,d}^{\gamma_2}(\cO,\tau')\right)} \\
&\quad +\left\|\psi [b (v)-b^i(u^1)] \psi^{-1} u^1\right\|_{L_p\left([0,\tau'];H_{p,d}^{\gamma_2}(\cO,\tau')\right)} \\
&\quad +\left\|a^{ij}(u^1){u^1_{x^j}}\right\|_{L_p\left([0,\tau'];H_{p,d}^{\gamma_2}(\cO,\tau')\right)}
+ \left\|\psi b^i(u^1) \psi^{-1}u^1\right\|_{L_p\left([0,\tau'];H_{p,d}^{\gamma_2}(\cO,\tau')\right)} \\
&\quad+ \left\|\delta^{ij}{u^1_{x^j}}\right\|_{L_p\left([0,\tau'];H_{p,d}^{\gamma_2}(\cO,\tau')\right)} 
\leq N(I(\delta,K_3)+\varepsilon).
\end{align*}
Similarly,
\begin{align*}
\|\psi \tilde f\|_{L_p\left([0,\tau'];H_{p,d}^{-1+ \gamma_2}(\cO,\tau')\right)}
\leq N(I(\delta,K_3)+\varepsilon).
\end{align*}
Therefore taking sufficiently small $\varepsilon$ and  $\delta$, and then assuming $K_3$ is very small, we get
\begin{align}
					\label{0127 eq 1}
\cR v \in \Phi(\tau') \qquad \forall v \in \Phi(\tau').
\end{align}

Next we claim that the operator $\cR$ becomes a contraction mapping on 
$$
\Phi(\tau') =\Phi(\tau') \cap {\frH^{1}_{p,d}(\cO,\tau')}
$$
with respect to the norm $\|\cdot\|_{\frH^1_{p,d}(\cO,\tau')}$ 
if $K_3$ and $\varepsilon$ are small enough.
We may assume that $K_3$ and $\varepsilon$ are small so that \eqref{0127 eq 1} holds.
Observe that for each $v,w \in  \Phi(\tau')$,  $(\cR v - \cR w)(0, \cdot)=0$ and
\begin{align*}
d(\cR v - \cR w)  
&=\Big[D_i\Big(a^{ij}(u^1)(\cR v - \cR w)_{x^j} + b^i(u^1)(\cR v - \cR w)+\bar f^i\Big) \\
&\quad + \bar b^i (u^1) ( \cR v - \cR w)_{x^i} + c(u^1)(\cR v - \cR w) + \bar f\Big]dt, \qquad t\leq \tau',
\end{align*}
where
\begin{align*}
\bar f^i 
&:= [a^{ij}(v)-a^{ij}(u^1)]v_{x^j} + [b^i(v)-b^i(u^1)]v\\
&\quad -[a^{ij}(w)-a^{ij}(u^1)]w_{x^j} - [b(w)-b^i(u^1)]w \\
&= [a^{ij}(v)-a^{ij}(u^1)](v-w)_{x^j} + [b^i(v)-b^i(u^1)](v-w) \\
&\quad +[a^{ij}(v)-a^{ij}(w)](w-u^1)_{x^j} + [b^i(v)-b(w)](w-u^1) \\
&\quad +[a^{ij}(v)-a^{ij}(w)]{u^1_{x^j}} + [b^i(v)-b(w)]u^1
\end{align*}
and
\begin{align*}
\bar f 
&:= [\bar b^i (v)- \bar b^i (u^1)]v_{x^i}+[c(v) -c(u^1)]v   \\
&\quad -[\bar b^i (w)- \bar b^i (u^1)]w_{x^i}-[c(w) -c(u^1)]w \\
&= [\bar b^i (v)- \bar b^i (u^1)](v-w)_{x^i}+[c(v) -c(u^1)](v-w)   \\
&\quad +[\bar b^i (v)- \bar b^i (w)](w-u^1)_{x^i}+[c(v) -c(w)](w-u^1) \\
&\quad +[\bar b^i (v)- \bar b^i (w)]{u^1_{x^i}}+[c(v) -c(w)]u^1.
\end{align*}
By Theorem \ref{lin thm},
\begin{align*}
&\|\cR v - \cR w\|_{\frH^{1}_{p,d}(\cO,\tau')}  
\leq N \left( \|\bar f^{i}\|_{\bL_{p,d}(\cO,\tau')} + \|\psi \bar f\|_{\bH^{-1}_{p,d}(\cO,\tau')} \right).
\end{align*}
Since 
$$|a^{ij}(v)-a^{ij}(u^1)|+|\psi [b^i(v)-b^i(u^1)]|\leq K_3|v-u^1|\leq K_3,
$$
it follows that
\begin{eqnarray*}
&&\|(a^{ij}(v)-a^{ij}(u^1))(v-w)_{x^j}\|_{\bL_{p,d}(\cO,\tau')} 
+ \|\psi (b^i(v)-b^i(u^1))\psi^{-1}(v-w)\|_{\bL_{p,d}(\cO,\tau')}\\
&&\leq N K_3 (\|(v-w)_x\|_{\bL_{p,d}(\cO,\tau')} +\|\psi^{-1} (v-w)\|_{\bL_{p,d}(\cO,\tau')} )\\
&& \leq NK_3 \|v-w\|_{\frH^1_{p,d}(\cO,\tau')}.
\end{eqnarray*}
Also, using 
$$
\sup_x \left(|w(t,x)-u^1(t,x)|+|\psi(w-u^1)_x(t,x)|+|u^1(t,x)|+|\psi u^1_x(t,x)|\right)\leq 2+n_0
$$
for $t\leq \tau'$,  we get
\begin{eqnarray*}
&& \|[a^{ij}(v)-a^{ij}(w)] (w-u^1)_{x^j}\|_{\bL_{p,d}(\cO,\tau')}= \|\psi^{-1}[a^{ij}(v)-a^{ij}(w)] \psi(w-u^1)_{x^j}\|_{\bL_{p,d}(\cO,\tau')}\\
&&\leq 2\|\psi^{-1}(a^{ij}(v)-a^{ij}(w))\|_{\bL_{p,d}(\cO,\tau)}\leq 2K_3 \|\psi^{-1}(v-w)\|_{\bL_{p,d}(\cO,\tau')},
\end{eqnarray*}
and similarly
\begin{eqnarray*}
&&\| [b^i(v)-b(w)] (w-u^1)\|_{\bL_{p,d}(\cO,\tau')} 
+\|[a^{ij}(v)-a^{ij}(w)]  u^1_{x^j}\|_{\bL_{p,d}(\cO,\tau')} \\
&&+ \| [b^i(v)-b(w)]u^1\|_{\bL_{p,d}(\cO,\tau')} 
\leq NK_3\|v-w\|_{\frH^1_{p,d}(\cO,\tau')}.
\end{eqnarray*}
Hence,
\begin{align*}
\|\bar f^{i}\|_{\bL_{p,d}(\cO,\tau')} \leq NK_3\|v-w\|_{\frH^1_{p,d}(\cO,\tau')},
\end{align*}
where $N$ depends only on $d$, $p$, $\gamma$, $\delta_0$,  $\bar K$, $T$, $\cO$, and $n_0$.
Furthermore, based on the same computations, we also get
\begin{align*}
\|\psi \bar f\|_{\bH^{-1}_{p,d}(\cO,\tau')}
\leq N\| \psi \bar f\|_{\bL_{p,d}(\cO,\tau')}
\leq NK_3\|v-w\|_{\frH^1_{p,d}(\cO,\tau')}.
\end{align*}
Therefore, taking $K_3$ small enough, we obtain
\begin{align}
						\label{eqn 0127 2}		
\|\cR v - \cR w\|_{\frH^{1}_{p,d}(\cO,\tau')}  
\leq \frac{1}{2}\|v - w\|_{\frH^{1}_{p,d}(\cO,\tau')}
\qquad \forall v,w \in  \Phi(\tau').
\end{align}

For $n=2,3,\cdots$, define $u^{n+1} = \cR u^n$ inductively. Then $\{u^n:n=1,2,\cdots\}$ becomes  a Cauchy sequence in $\frH^1_{p,d}(\cO,\tau')$. Let $u$ be the limit of $u^n$ in $\frH^{1}_{p,d}(\cO,\tau')$.
Then using the relation
\begin{align*}
 du^{n+1}  
&= \Big[D_i\Big(a^{ij}(u^1)u^{n+1}_{x^j} +[a^{ij}(u^n)-a^{ij}(u^1)]u^n_{x^j}+ b^i(u^1)u + [b^i(u^n)-b^i(u^1)]u^n \Big)\\
 &\quad +D_i f^i+
 \bar b^i(u^1)u^{n+1}_{x^i}+ [\bar b^i (u^n)- \bar b^i (u^1)]u^n_{x^i}+ c(u^1)u^{n+1}    \\
&\quad +[c(u^n) -c(u^1)]u^n+ f\Big]dt +  g^k dW_t^k, \qquad t \leq \tau',
\end{align*}
and taking $n\to \infty$, we find $u(0,\cdot)=u_0$ and the equality
\begin{align*}
du= \Big[D_i\Big(a^{ij}(u) {u}_{x^j}+ b(u) u+f^i\Big) + \bar b^i (u){u}_{x^i}+ c(u)u + f\Big]dt  +  g^k dW_t^k
\end{align*}
holds for almost all $t\leq \tau'$ (a.s.).  It follows that the above equality holds for all $t\leq \tau'$ (a.s.)
since both sides above are continuous in $t$ due to Theorem \ref{embedding}.
\vspace{2mm}

{\bf Step 2.} We remove the condition that $K_3$ is very small. 
\vspace{2mm}

For $\delta>0$, 
consider the transform $u(t,x) \mapsto v(t,x):=\frac{1}{\delta}u(t,x)$.
Then $u$ is a solution of \eqref{nu zero} if and only if
\begin{align}
dv 
					\notag
&= \Big[D_i \Big[\tilde a^{ij}(v)u_{x^j}+\tilde b^i(v) v + \tilde f^i\Big] + \tilde{\bar{b}}^i(v) v_{x^i} +\tilde c(v)v +f\Big]dt+g^k dW^k, \\
					\label{0210 eqn v}
&~ t \leq \tau', \quad v(0)=u_0,
\end{align}
where
\begin{align*}
&\tilde a^{ij}(t,x,z) = a^{ij}( t, x,\delta z),~
\tilde b^i(t,x,z) = b^i(t, x,\delta z),~
\tilde \tilde{\bar b}^i (t,x,z) = \bar b^i( t, x,\delta z) \\
& \tilde c(t,x,z)= c(t,x,\delta z),~
\tilde f^i (t,x) = \frac{1}{\delta} f^i(t, x),~
\tilde f (t,x) = \frac{1}{\delta} f( t,  x),~
\tilde g^k (t,x) = \frac{1}{\delta} g^k(t, x).
\end{align*}
By taking $\delta$ small enough, \eqref{0210 eqn v} has a solution $v \in \frH^1_{p,d}(\cO,\tau')$ due to Step 1,
and therefore \eqref{nu zero} has a solution $u \in \frH^1_{p,d}(\cO,\tau')$.

\end{proof}

\mysection{Proof of Theorem \ref{main thm}}

First we prove the uniqueness result.

\begin{lemma}
                                            \label{uni lem}
Suppose that Assumptions \ref{as mea}, \ref{as el}, \ref{as bd 2}, and \ref{as co con} hold.   
Let $f^{i}\in \bH^{\gamma_0}_{p,d}(\cO,\tau)$,
$f\in \psi^{-1} \bH^{\gamma_0-1}_{p,d}(\cO,\tau)$,
$g \in\bH^{\gamma_0}_{p,d}(\cO,\tau,\l_2)$, 
and $u_0 \in U^{\gamma_0+1}_{p,d}(\cO)$.
Assume that  $u,v  \in \frH_{2,d}^1(\cO,\tau)$ are solutions to
the  equation
\begin{align*}
du 
&= \Big[D_i \Big[a^{ij}(u)u_{x^j}+b^i(u) u + f^i\Big] + \bar{b}^i(u) u_{x^i} +c(u)u +f\Big]dt+g^k dW^k_t, \quad t\leq \tau \\
&~  \quad u(0,\cdot)=u_0.
\end{align*}
Then $u=v$ in $\frH^1_{2,d}(\cO,\tau)$, and  moreover
$u \in \frH_{p,d,\loc}^{1+\gamma}(\cO,\tau) $ for any $\gamma<\gamma_0$.
\end{lemma}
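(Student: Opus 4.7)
The plan is to first establish the regularity upgrade $u\in\frH^{1+\gamma}_{p,d,\loc}(\cO,\tau)$ for every $\gamma<\gamma_0$ (and likewise for $v$), and then obtain uniqueness by linearizing the difference $w:=u-v$ and invoking Theorem \ref{l2 thm}. The regularity upgrade is an essential prerequisite: without pointwise control of $\rho v_x$ near $\partial\cO$, the linear SPDE satisfied by $w$ has lower-order coefficients whose boundary blow-up cannot be dominated in the sense of Assumption \ref{as li bd}.

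For the regularity upgrade, I would freeze the unknown in the nonlinear coefficients and view $u$ as the solution of a linear SPDE with bounded $(\omega,t,x)$-coefficients $\tilde a^{ij}(t,x):=a^{ij}(t,x,u(t,x))$, and similarly $\tilde b^i,\tilde{\bar b}^i,\tilde c$. Applying Theorem \ref{hol thm} (its hypotheses are satisfied; in particular $u_0\in L_p(\Omega,C^{\alpha}(\cO))$ follows from $u_0\in U^{\gamma_0+1}_{p,d}(\cO)$ via the Sobolev embedding in Lemma \ref{wei lem}(ii)) produces $u\in C^{\alpha}([0,\tau]\times\cO)$ a.s.\ for some $\alpha>0$ with finite $p$-th moment. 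Localizing by stopping times $\tau_n\uparrow\tau$ bounding $|u|_{C^{\alpha}}$, Theorem \ref{lin thm} with $\gamma=0$ (only boundedness and uniform continuity of the composed coefficients required) yields $u\in\frH^1_{p,d}(\cO,\tau_n)$. Then Lemma \ref{wei lem}(ii), applied to $u$ viewed in $\psi H^1_{p,d}=H^1_{p,d-p}$, provides improved uniform H\"older regularity of $u$, which in turn bounds the weighted H\"older norms $|a^{ij}(t,\cdot,u(t,\cdot))|^{(0)}_{\gamma_+}$ (using $|a^{ij}(x,u(x))-a^{ij}(y,u(y))|\leq K_2|x-y|^{\gamma_0}+K_3|u(x)-u(y)|$) for the sharper exponent needed in Theorem \ref{lin thm}. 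Reapplying Theorem \ref{lin thm} with larger $\gamma$ and iterating finitely many times reaches $u\in\frH^{1+\gamma}_{p,d,\loc}(\cO,\tau)$ for every $\gamma<\gamma_0$.

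For uniqueness, set $w:=u-v$ (so $w(0)=0$). Using Assumption \ref{as co con}, I decompose each nonlinear difference into a bounded lower-order coefficient times $w$: for instance
\[
a^{ij}(u)u_{x^j}-a^{ij}(v)v_{x^j}=\tilde a^{ij}w_{x^j}+\alpha^{ij}(t,x)v_{x^j}w,
\]
with $\alpha^{ij}:=[a^{ij}(u)-a^{ij}(v)]/w$ (taken $=0$ when $w=0$) bounded by $K_3$; analogous identities produce measurable $\beta^i,\bar\beta^i,\gamma$ satisfying $\psi|\beta^i|+\psi|\bar\beta^i|+\psi^2|\gamma|\leq 3K_3$. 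This exhibits $w$ as a solution of the linear SPDE
\[
dw=[D_i(\tilde a^{ij}w_{x^j}+\tilde B^i w)+\tilde{\bar b}^i w_{x^i}+\tilde C w]\,dt,\quad w(0)=0,
\]
with $\tilde B^i:=\tilde b^i+\alpha^{ij}v_{x^j}+\beta^i v$ and $\tilde C:=\tilde c+\bar\beta^i v_{x^i}+\gamma v$. Since $v\in\frH^{1+\gamma}_{p,d,\loc}$, Corollary \ref{hol cor} supplies uniform bounds on $\rho v_x$ and $v$ on stopping-time intervals $[0,\tau_n']$, and $v,\rho v_x\to 0$ as $\rho\to 0$; hence $\rho|\tilde B^i|+\rho|\tilde{\bar b}^i|+\rho^2|\tilde C|$ is bounded and vanishes at $\partial\cO$, so Assumptions \ref{as li el} and \ref{as li bd} are met. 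Theorem \ref{l2 thm} applied to $w$ with zero data forces $w=0$ on $[0,\tau_n']$, and sending $\tau_n'\uparrow\tau$ yields $u=v$ in $\frH^1_{2,d}(\cO,\tau)$.

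The main obstacle is the bootstrapping in the regularity upgrade: one must carefully track the sharp H\"older exponent of the composed coefficients at each iteration, with several rounds potentially needed when $\gamma_0>1-d/p$, and verify that each application of Theorem \ref{lin thm} is legitimate with the required weighted H\"older estimate on $|a^{ij}(t,\cdot,u(t,\cdot))|^{(0)}_{\gamma_+}$. A secondary delicate point is justifying the boundary blow-up decay for the linearized coefficients $\tilde B^i$ and $\tilde C$ in the uniqueness step, which depends sensitively on Corollary \ref{hol cor}'s finer $\rho^{-\varepsilon}$- and $\rho^{\gamma-\varepsilon'}$-weighted estimates applied to $v$ and $\rho v_x$.
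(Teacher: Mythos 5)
Your proposal is correct and shares the paper's overall architecture: freeze the unknown in the coefficients, get a H\"older estimate from Theorem \ref{hol thm} (with $u_0\in L_p(\Omega,C^{\alpha}(\cO))$ via Lemma \ref{wei lem}(ii), exactly as the paper does), localize by stopping times controlling the composed coefficients, upgrade regularity by iterating Theorem \ref{lin thm}, and prove uniqueness by linearizing the difference $w=u-v$. Two points in your uniqueness step genuinely differ from the paper, both to your advantage. First, you write the nonlinear differences as bounded difference quotients, e.g.\ $\alpha^{ij}=[a^{ij}(u)-a^{ij}(v)]/(u-v)$ with $|\alpha^{ij}|\leq K_3$ straight from Assumption \ref{as co con}; the paper instead uses the integral mean-value formula with $\int_0^1 a^{ij}_u(\vartheta u+(1-\vartheta)v)\,d\vartheta$ and its analogues, which tacitly presumes differentiability of the coefficients in $u$ that the Lipschitz Assumption \ref{as co con} does not literally provide, so your decomposition is more faithful to the stated hypotheses. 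Second, you close with Theorem \ref{l2 thm} (only measurable coefficients, uniqueness directly in $\frH^1_{2,d}$ where $u,v$ live), whereas the paper applies Theorem \ref{lin thm} in $\frH^1_{p,d}(\cO,\tau_{n,m})$; your choice avoids even the uniform-continuity check on the averaged leading coefficient, though — as you correctly observe — neither route escapes the regularity upgrade, since verifying Assumption \ref{as li bd} for $\tilde B^i,\tilde C$ needs the boundary decay of $v$ and $\rho v_x$ coming from \eqref{embed eqn} and Corollary \ref{hol cor} (the paper's \eqref{0210 eqn 1}). One repair is needed in your bootstrap: attributing the \emph{uniform-in-time} H\"older control of $u$ to Lemma \ref{wei lem}(ii) applied to the slices $u(t)\in H^1_{p,d-p}(\cO)$ is not right, since that only bounds a.e.\ fixed $t$ by a quantity that is merely $L_p$ in time; the uniform control must come from Theorem \ref{embedding} and Corollary \ref{hol cor} (i.e.\ \eqref{eqn 10.20} and \eqref{embed eqn}), which is the paper's device — with that substitution your finitely-many-rounds iteration, gaining roughly $1-(d+2)/p$ in the exponent per round as you yourself flag, goes through, and is in fact spelled out more carefully than the paper's single jump from $\gamma<\alpha$ to all $\gamma<\gamma_0$.
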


\begin{proof}
By Lemma \ref{wei lem}(ii),
\begin{align*}
\bE |u_0|_{C^{1-d/p -2/p}(\cO)} 
\leq N\bE\|\psi^{2/p-1}u_0\|^p_{H^{1-2/p}_{p,\theta}(\cO)} 
\leq N\|u_0\|^p_{U^{\gamma_0}_{p,\theta}(\cO)}.
\end{align*}
Thus by Theorem \ref{hol thm}, there exists a $\alpha \in (0,1)$ so that 
$$
\bE |u|^p_{C^{\alpha}([0,\tau]\times \cO)}+\bE |v|^p_{C^{\alpha}([0,\tau]\times \cO)}<\infty.
$$
Define
\begin{align*}
\tau^{(1)}_n  
:= \inf \{ t \leq \tau : 
&|a^{ij}(t,x,u)|_{C^\alpha([0,t]\times \cO)}+|b(t,x,u)|_{C^\alpha([0,t]\times \cO)}  > n \}, 
\end{align*}
\begin{align*}
\tau^{(2)}_n  
:= \inf \{ t \leq \tau : 
&|a^{ij}(t,x,v)|_{C^\alpha([0,t]\times \cO)}+|b(t,x,v)|_{C^\alpha([0,t]\times \cO)}> n \}, 
\end{align*}
and $\tau_n=\tau^{(1)}_n \wedge \tau^{(2)}_n$.  
Then $\tau_n \to \tau$ $(a.s.)$ as $n \to \infty$ and
by Theorem \ref{lin thm}  for each $n$ we have $u,v\in \frH^{1+\gamma}_{p,d}(\tau_n,\cO)$ for all $\gamma < \alpha$.
Fix $0<\gamma <\alpha $.
Due to \eqref{embed eqn},
\begin{align*}
&\bE | u |^p_{C([0,\tau_n] \times \cO)}
+\bE | \psi^\gamma Du|^p_{C([0,\tau_n]\times \cO)} 
+\bE | v |^p_{C([0,\tau_n] \times \cO)}
+\bE | \psi^\gamma Dv|^p_{C([0,\tau_n]\times \cO)} 
< \infty.
\end{align*}
Thus there exists a sequence of stopping times $\tau_{n,m} \leq \tau_n$ such that
$\tau_{n,m}$ converges to $\tau_n$ $(a.s.)$ as $m \to \infty$, and (a.s.)
\begin{equation}
	                   \label{0210 eqn 1}				
| u |_{C([0,\tau_n] \times \cO)}^p +| v |_{C([0,\tau_n] \times \cO)}^p
+|\psi^\gamma Du |_{C([0,\tau_n] \times \cO)}^p 
+|\psi^\gamma Dv |_{C([0,\tau_n] \times \cO)}^p \leq m.
\end{equation}

\noindent
Due to Assumption \ref{as co con},
\begin{align*}
&D_i\left(a^{ij }(u)u_{x^j}-a^{ij}(v)v_{x^j}\right)\\
&=D_i \left(\int_0^1 \frac{d}{d \vartheta} \left(a^{ij}(\vartheta u + (1-\vartheta)v)(\vartheta u  - (1-\vartheta)v)_{x^j} \right) d\vartheta \right) \\
&= D_i \left(  \int_0^1 a^{ij }(\vartheta u + (1-\vartheta)v)\,d \vartheta \,  (u -v)_{x^j} \right)\\
&\quad+D_i\left(\int_0^1  a_u^{ij }(\vartheta u+ (1-\vartheta)v)  (\vartheta u  - (1-\vartheta)v)_{x^j}d\vartheta \,\,(u - v) \right) 
.
\end{align*}
Similarly,
\begin{align*}
&D_i\left(b(u)u-b^i(v)v \right)\\
&=D_i \left(\int_0^1 \frac{d}{d \vartheta} \left(b(\vartheta u + (1-\vartheta)v)(\vartheta u  - (1-\vartheta)v) \right) d\vartheta \right)\\
&= D_i \left(  \int_0^1 b(\vartheta u + (1-\vartheta)v)\,d \vartheta \,  (u -v) \right)\\
&\quad+ D_i\left(\int_0^1  b_u(\vartheta u+ (1-\vartheta)v)  (\vartheta u  - (1-\vartheta)v) d\vartheta \,\,(u - v) \right) 
,
\end{align*}
\begin{align*}
&\bar b^i(u)u_{x^i}-\bar b^i(v)v_{x^i} \\
&= \int_0^1 \frac{d}{d \vartheta} \left(\bar b^i(\vartheta u + (1-\vartheta)v)(\vartheta u  - (1-\vartheta)v)_{x^i} \right) d\vartheta \\
&=  \int_0^1 \bar b^i(\vartheta u + (1-\vartheta)v)\,d \vartheta \,  (u -v)_{x^i} \\
&\quad+ \int_0^1  \bar b^i_u(\vartheta u+ (1-\vartheta)v)  (\vartheta u  - (1-\vartheta)v)_{x^i} d\vartheta \,\,(u - v)  
,
\end{align*}
and
\begin{align*}
&c(u)u-c(v)v \\
&= \int_0^1 \frac{d}{d \vartheta} \left(c(\vartheta u + (1-\vartheta)v)(\vartheta u  - (1-\vartheta)v) \right) d\vartheta \\
&= \int_0^1 c(\vartheta u^1 + (1-\vartheta)v)\,d \vartheta \,  (u -v) \\
&\quad+ \int_0^1  c_u(\vartheta u+ (1-\vartheta)v)  (\vartheta u  - (1-\vartheta)v) d\vartheta \,\,(u - v) 
.
\end{align*}
Defining 
\begin{align*}
&\tilde a^{ij} :=\int_0^1 a^{ij }(\vartheta u + (1-\vartheta)v)\,d \vartheta \\
&\tilde b := \int_0^1  a_u^{ij }(\vartheta u+ (1-\vartheta)v)  (\vartheta u  - (1-\vartheta)v)_{x^j}d\vartheta
+\int_0^1 b(\vartheta u + (1-\vartheta)v)\,d \vartheta \\
&\quad + \int_0^1  b_u(\vartheta u+ (1-\vartheta)v)  (\vartheta u  - (1-\vartheta)v) d\vartheta \\
&\tilde{\bar b}^i := \int_0^1 \bar b^i(\vartheta u + (1-\vartheta)v)\,d \vartheta  \\
&\tilde c := \int_0^1  \bar b^i_u(\vartheta u+ (1-\vartheta)v)  (\vartheta u  - (1-\vartheta)v)_{x^i} d\vartheta
+\int_0^1 c(\vartheta u^1 + (1-\vartheta)v)\,d \vartheta \\
&\quad +\int_0^1  c_u(\vartheta u+ (1-\vartheta)v)  (\vartheta u  - (1-\vartheta)v) d\vartheta,
\end{align*}
we have
\begin{align*}
(u-v)_t
=: D_i[\tilde{a}^{ij}(u-v)_{x^j}+\tilde{b}^i(u-v)] + \tilde{\bar b}^i(u-v) + \tilde c (u-v), \quad t \leq \tau_{n,m}.
\end{align*}
Due to \eqref{0210 eqn 1}, \eqref{0213 eqn 1}, and the definition of stopping times $\tau_n^{(1)}, \tau_n^{(2)}$, 
the coefficients $\tilde a^{ij}$ are uniformly continuous and 
$$
\sup_{\omega}\sup_{t\leq \tau_{n,m}} 
\left(\left|\rho \left(\tilde{b}+\tilde{\bar b}^i\right)\right| +\left|\rho^2 c\right| \right)
\to 0
$$ 
as $\rho(x) \to 0$.
Thus we can apply Theorem \ref{lin thm} to conclude $u=v$ 
as an element in $\frH_{p,d}^1(\cO,\tau_{n,m})$.
To obtain better regularity,  recall Assumption \ref{as bd 2}.
Due to \eqref{0210 eqn 1}, there exists a $\bar K$ such that for any $\gamma < \gamma_0 $ and $(\omega,t)$,
\begin{align*}
|a^{ij}(t,\cdot,u(\cdot))|^{(0)}_{\gamma_+}+|\psi b^i(t,\cdot,u(\cdot))|^{(0)}_{\gamma_+} 
+|\psi \bar b^i (t,\cdot,u(\cdot))|^{(0)}_{\gamma_+}+|\psi^2 c(t,\cdot,u(\cdot))|^{(0)}_{\gamma_+} 
\leq \bar K.
\end{align*}
Therefore by Theorem \ref{lin thm} again, we conclude
$u \in \frH_{p,d}^{1+\gamma}(\cO,\tau_{n,m})$. 
The lemma is proved.
\end{proof}

%

\vspace{3mm}
{\bf Proof of Theorem \ref{main thm}}
\vspace{3mm}

We  prove that  there is a unique  solution $u$ 
in the class $\frH^1_{2,d}(\cO,\tau)$ and also show this solution belongs to $\frH^{1+\gamma}_{p,d,\loc}(\cO,\tau)$ for any $\gamma<\gamma_0$. The estimates \eqref{main est} and \eqref{holder main}
 are easy  consqeuenes of Theorem \ref{l2 thm} and Corollary \ref{hol cor}.

We divide our proof into two steps.

\vspace{2mm}
{\bf Step 1.} We assume $\nu(t,x)=0$.
\vspace{2mm}

Due to Lemma \ref{uni lem}, we only need to prove the existence of a solution in the class $\frH^1_{2,d}(\cO,\tau)$. 
Define 
\begin{align*}
\Pi 
:= \{\text{stopping times $\tau_{a} \leq \tau$: equation (\ref{main eqn}) has a  solution}~
u\in  \frH^1_{2,d}(\cO,\tau_a)\, \}.
\end{align*}
Observe that if $\tau_{{a_1}}, \tau_{{a_2}} \in \Pi$, then $\tau_{a_1} \vee \tau_{a_2} \in \Pi$. 
Indeed, if $u^1, u^2$ are solutions for $t\leq \tau_{a_1}$ and $t\leq \tau_{a_2}$, respectively, then $u^1=u^2$ for $t\leq \tau_{a_1} \wedge \tau_{a_2}$ by Lemma \ref{uni lem}.  
Thus one can find a solution  $u\in  \frH^1_{2,d}(\cO,\tau_a)$ 
defining $u=u^1$ if $\tau_{a_1}\geq \tau_{a_2}$, and $u=u^2$ otherwise.

 Define $r:= \sup_{\tau_a \in \Pi} \{\bE \tau_a\}$. Then there exist nondecreasing stopping times $\tau_n$  (otherwise one can consider $\tau_1 \vee \cdots \vee \tau_n$) such that
 $\bE\tau_n \uparrow r$.  Put $\bar{\tau} := \lim_{n \to \infty} \tau_n$. Then $\bar{\tau}$ becomes a stopping time
since the filtration $\cF_t$ is right continuous
and $\bE\bar{\tau} = r$ by the monotone convergence theorem.

We will show $\bar{\tau} \in \Pi$.  Since $\tau_n$ are nondecreasing, using the uniqueness result of Lemma \ref{uni lem} we conclude that there exists $u \in \frH^{1}_{2, d, \loc}(\bar \tau,\cO)$ such that $u(0,\cdot)=u_0$ and 
for each $n$
\begin{equation}
					\label{0202 eqn 1}
  du=\left( D_i \left[a^{ij}(u)u_{x^j}+b(u)u+f^i\right]+\bar b(u)u_{x^i}+c(u)u+f\right)+g^kdW^k_t,\quad t\leq \tau_n.
\end{equation}
Moreover due to Theorem \ref{l2 thm} ,
\begin{align*}
\|u\|_{\frH_{2,d}^1(\cO,\tau_n)} 
&\leq N \left(\sum_{i=1}^d\|f^i\|_{\bL_{2,d}(\cO,\bar \tau)}
+\|\psi f\|_{\bH^{-1}_{2,d}(\cO, \bar \tau)}+\|g\|_{\bL_{2,d}(\cO,\bar \tau, l_2)}\right)
\end{align*}
where $N$ is independent of $n$. 
Thus taking $n \to \infty$, we have
\begin{equation}
  \label{eqn 3.17.1}
\|\psi^{-1}u\|_{\bH_{2,d}^1(\cO,\bar \tau)} < \infty.
\end{equation}
This implies that the right hand side of \eqref{0202 eqn 1}   is $\psi^{-1}H^{-1}_{2,d}$-valued continuous function on $[0, \bar \tau]$,  and therefore $u$  is continuously extendible to $[0, \bar \tau]$.
Therefore   \eqref{0202 eqn 1} holds for $t\leq \bar{\tau}$. This with  (\ref{eqn 3.17.1})  shows that   $u \in \frH_{2,d}^1(\cO,\bar \tau)$.  Consequently, $\bar \tau \in \Pi$, and by Theorem \ref{embedding},  $u\in C([0,\bar{\tau}];L_{2,d}(\cO))$ (a.s.).

Next we claim $\bar{\tau}=\tau$ $(a.s)$. Suppose not. Then $P(\bar{\tau}<\tau)>0$.   
By Theorem \ref{embedding}(iii), 
$$
\|u(\bar{\tau},\cdot)\|^2_{U^1_{2,d}(\cO)}=\bE \|u(\bar \tau, \cdot)\|^2_{L_{2,d}(\cO)} < \infty.
$$
Denote $\bar u_0 := u(\bar \tau, \cdot)$, $\cF^{\bar{\tau}}_t:=\cF_{t+\bar{\tau}}$, and  $\bar{w}^k_{t}:= w^k_{t+\bar{\tau}} - w^k_{\bar{\tau}}$. Then $\bar{w}^k_t$ are independent Wiener processes relative to  $\cF^{\bar{\tau}}_t$, 
$\bar u_0$ is $\cF_0^{\bar \tau}$-measurable, and 
  $\tilde{\tau}:=\tau-\tilde \tau$ is  a nonzero stopping time with respect to $\cF^{\bar{\tau}}_t$.
  Consider the equation
\begin{align*}
 d\bar{v} 
 &= \Big[D_i\left(a^{ij}(\bar{\tau}+t,x,\bar{v})\bar{v}_{x^j} + b(\bar \tau + t,x, \bar v)v+f^i(\overline{\tau}+t ) \right) \\
 &\qquad +\bar b^i (\bar \tau + t,x,v)v_{x^i} + c(\bar \tau +t,x,v)v+f(\overline{\tau}+t)\Big]dt  \\
 &\quad + g^k(\bar \tau +t) d\bar{w}^k_t,\qquad t \leq \tilde \tau; \quad \bar{v}(0, \cdot) =\bar{u}_0.
\end{align*}
Then  by Lemma \ref{local sol}, there exists a nonzero stopping time $\sigma\leq \tau-\bar \tau$ (with respect to $\cF^{\bar{\tau}}_t$) so that the above equation has an  $\cF^{\bar{\tau}}_t$-adapted solution 
$$
\bar{v}\in \frH^1_{p,d}(\cO,\sigma) \subset \frH^1_{2,d}(\cO,\sigma).
$$
Define $\tau_0=\bar{\tau}+\sigma$. It is easy to check that $\bE \tau_0>r$ and $\tau_0$ is a stopping time since $\cF_t$ is right continuous.
Define
$$
U(t,x) = \begin{cases}
u(t,x)  \quad &: t \leq \bar{\tau}\\
 \bar{v}(t-\bar{\tau} ,x) \quad &:  \bar{\tau}<t  \leq  \tau_0
\end{cases}
$$
then $U$ satisfies (\ref{main eqn}) for   $t\leq \tau_0$ (a.s.) and $U\in \frH^1_{2,d}(\cO,\tau_0)$.   
This is a contradiction since  $\bE \tau_0 >r$ and $\tau_0 \in \Pi$. 
Therefore, we conclude
$\bar{\tau} = \tau$, and the proof for the case $\nu=0$  is completed.

\vspace{2mm}
{\bf Step 2.} (General case)
\vspace{2mm}
Let $\gamma < \gamma_1 < \gamma_0$
and denote
$$h(t,x)=e^{-\int_0^t \nu^k(s,x)dW^k_s}, \quad 
\tau_n =\inf \{ t \leq \tau : |h(t,\cdot)|^{(0)}_{1+\gamma_1} > n\},
$$
$$
\tilde a^{ij}( t, x, z)=a^{ij}(t,x,hz) ,\quad 
\tilde b^i(t, x,  z)=b^i(t,x,h z),\quad
\tilde{\bar b}^i( t, x, z)=\bar b^i (t,x,h z),
$$
$$
\tilde c(t,x,z)=c(t,x,h z) -\sum_k \left(\nu^k(t,x)\right)^2, \quad 
\tilde f^i=h^{-1}f^i,\quad
\tilde f=h^{-1}f,\quad
\tilde g^k:=h^{-1} g^k.
$$
Then Assumptions \ref{as mea}, \ref{as el},  \ref{as bd 2}, and \ref{as co con} hold 
with $\tilde a^{ij}$, $\tilde b^i$, $\tilde{\bar b}^i$, $\tilde c$, $\tilde f^i$, $\tilde f$, and $\tilde g$ 
on $[0,\tau_n]$.
Therefore by Step 1, there exists a 
$u^{(n)} \in \frH^1_{2,d}(\cO,\tau_n) \cap \frH^{1+\gamma}_{p,d,\loc}(\cO,\tau_n)$, $\gamma <\gamma_0$, 
such that
\begin{align}
du^{(n)} 
						\notag
&= \Big[D_i \Big(\tilde a^{ij}(u^{(n)})u^{(n)}_{x^j}+\tilde b^i(u^{(n)}) u^{(n)} + f^i\Big) 
+ \tilde{\bar{b}}^i(u^{(n)}) u^{(n)}_{x^i} + \tilde c(u^{(n)})u^{(n)} +\tilde f\Big]dt, \\
					\label{0216 eqn}
&~ \quad +\tilde g^k dW^k, \qquad  t  \leq \tau_n; \quad u^{(n)}(0,\cdot)=u_0.
\end{align}
Denote 
$$
v^{(n)}(t,x):=u^{(n)}(t,x) e^{-\int_0^t \nu^k(s,x)dW^k_s}=u(t,x)h(t,x).
$$
Then 
$$
v^{(n)} \in \frH^1_{2,d}(\cO,\tau_n) \cap \frH^{1+\gamma}_{p,d,\loc}(\cO,\tau_n).
$$ 
Moreover by It\^o's formula
\begin{align*}
dh
= \left(h\left(\nu^k\right)^2 \right)dt-\left(h\nu^k\right)d{W_t^k}  
\end{align*}
and 
\begin{align}
dv^{(n)} 
						\notag
&= u^{(n)}(dh) + (du^{(n)})h \\
						\notag
&= \Big[D_i \Big(\tilde a^{ij}(u^{(n)})u^{(n)}_{x^j}h+\tilde b^i(u^{(n)}) u^{(n)}h + \tilde f^ih\Big)  \\
						\notag
&\quad+ \tilde{\bar{b}}^i(u^{(n)}) (u^{(n)}h)_{x^i} 
+\tilde c(u^{(n)})u^{(n)}h +\tilde fh \Big]dt \\
						\notag
& \quad+\left(u^{(n)}h\left(\nu^k\right)^2 \right)dt-\left(u^{(n)}h\nu^k+\tilde g^kh\right)dW_t^k \\
						\notag
&= \left[D_i \Big(a^{ij}(v^{(n)})v^{(n)}_{x^j}+b^i(v^{(n)}) v^{(n)} + f^ih\Big) + \bar{b}^i(v^{(n)}) v^{(n)}_{x^i} +c(v^{(n)})v^{(n)} +f\right] dt \\
						\label{0216 eqn 1}
& \quad+\left( \nu^k v^{(n)}+ g^k\right)dW_t^k,
\qquad t \leq \tau_n; \quad u(0,\cdot)=u_0.
\end{align}
Using the uniqueness result of  equation \eqref{0216 eqn} (Lemma \ref{uni lem}), we conclude that $v^{(n)}$ is also unique solution to  (\ref{0216 eqn 1}), and
\begin{align*}
v^{(n)} = v^{(m)} \quad \text{on} \quad [0,\tau_n) \quad \forall n \leq m.
\end{align*}
Therefore there exists a $v \in \frH^1_{2,d,\loc}(\cO,\tau) \cap \frH^{1+\gamma}_{p,d,\loc}(\cO,\tau)$ 
such that
\begin{align*}
v^{(n)} = v\quad \text{on} \quad [0,\tau_n) \quad \forall n.
\end{align*}

By Theorem \ref{l2 thm}, there exists a $\tilde v \in \frH^1_{2,d}(\cO,\tau)$ to the equation
\begin{align}
d\tilde v
						\notag
&= \left[D_i \Big( a^{ij}(v)\tilde v_{x^j}+b^i(v) \tilde v + f^ih\Big) + \bar{b}^i(v) \tilde v_{x^i} +c(v)\tilde v +f\right] dt \\
						\label{0216 eqn 1}
& \quad+\left( \nu^k \tilde v+ g^k\right)dW_t^k,
\qquad t \leq \tau, \quad x \in \cO, \quad u(0,\cdot)=u_0.
\end{align}
Due to Lemma \ref{uni lem}, 
\begin{align*}
v = \tilde v\quad \text{on} \quad [0,\tau_n) \quad \forall n
\end{align*}
and therefore 
$$
v=\tilde v \in \frH^1_{2,d}(\cO,\tau) \cap \frH^{1+\gamma}_{p,d,\loc}(\cO,\tau).
$$
The theorem is proved. \qed
\vspace{5mm}


\begin{thebibliography}{10}

\bibitem{da1996fully}
G.~Da~Prato and L.~Tubaro.
\newblock {Fully nonlinear stochastic partial differential equations}.
\newblock {\em SIAM Journal on Mathematical Analysis}, 27(1):40--55, 1996.

\bibitem{gilbarg1980intermediate}
D.~Gilbarg and L.~H{\"o}rmander.
\newblock Intermediate schauder estimates.
\newblock {\em Archive for Rational Mechanics and Analysis}, 74(4):297--318,
  1980.

\bibitem{gilbarg2015elliptic}
D.~Gilbarg and N.~S. Trudinger.
\newblock {\em {Elliptic partial differential equations of second order}}.
\newblock Springer, 2015.

\bibitem{kim2014some}
I.~Kim and K.-H. Kim.
\newblock {Some $L_p$ and H{\"o}lder Estimates for Divergence Type Nonlinear
  SPDEs on $C^1$-Domains}.
\newblock {\em Potential Analysis}, 41(2):583--612, 2014.

\bibitem{kim2004lq}
K.-H. Kim.
\newblock {$L_q(L_p)$ theory and H{\"o}lder estimates for parabolic SPDEs}.
\newblock {\em Stochastic Processes and their Applications}, 114(2):313--330,
  2004.

\bibitem{kim2009sobolev}
K.-H. Kim.
\newblock {Sobolev space theory of SPDEs with continuous or measurable leading
  coefficients}.
\newblock {\em Stochastic Processes and their Applications}, 119(1):16--44,
  2009.

\bibitem{Kim04}
K.-H. Kim.
\newblock {On $L_p$ theory of stochastic partial differential equations of divergence form in $C^1$ domains}
\newblock  Probab. Theory Related Fields, 130: 473--492, 2004.

\bibitem{kim2004sobolev}
K.-H. Kim and N.~V. Krylov.
\newblock {On the Sobolev space theory of parabolic and elliptic equations in
  $C^1$ domains}.
\newblock {\em SIAM journal on mathematical analysis}, 36(2):618--642, 2004.

\bibitem{krylov1999weighted}
N.~V. Krylov.
\newblock {Weighted Sobolev spaces and Laplace's equation and the heat
  equations in a half space}.
\newblock {\em Communications in Partial Differential Equations},
  24(9-10):1611--1653, 1999.

\bibitem{krylov2001some}
N.~V. Krylov.
\newblock {Some properties of traces for stochastic and deterministic parabolic
  weighted Sobolev spaces}.
\newblock {\em Journal of Functional Analysis}, 183(1):1--41, 2001.


\bibitem{KL2}  N.V. Krylov and S.V. Lototsky.
\newblock {A Sobolev space theory of SPDEs with constant coefficients in a half space}
\newblock SIAM. Math. Anal., 32(1): 19--33, 1000.



\bibitem{krylov1981stochastic}
N.~V. Krylov and B.~L. Rozovskii.
\newblock {Stochastic evolution equations}.
\newblock {\em Journal of Soviet Mathematics}, 16(4):1233--1277, 1981.


\bibitem{LSU} O.A. Ladyzhenskaya, V.A. Solonnikov, N.N. Ural'tceva.
\newblock{Linear and Quasi-linear Parabolic Equations}
\newblock Nauka, Noscow, 1967 (in Russian) ; Amer. Math. Soc., Providence, RI, 1968 (in English).



\bibitem{liustochastic}
W.~Liu and M.~R{\"o}ckner.
\newblock {\em {Stochastic Partial Differential Equations: An introduction}}.
\newblock Springer, 2015.

\bibitem{lototsky2000sobolev}
S.~V. Lototsky.
\newblock {Sobolev spaces with weights in domains and boundary value problems
  for degenerate elliptic equations}.
\newblock {\em Methods and Applications of Analysis}, 7(1):195--204, 2000.

\bibitem{lototsky2001linear}
S.~V. Lototsky.
\newblock Linear stochastic parabolic equations, degenerating on the boundary
  of a domain.
\newblock {\em Electronic Journal of Probability}, 6(24):1--14, 2001.


\bibitem{Ro} B. L. Rozovskii.
\newblock {Stochastic evolution systems}
\newblock Kluner, Dordrecht, 1990.




\bibitem{yoo1998unique}
H.~Yoo.
\newblock {On the unique solvability of some nonlinear stochastic PDEs}.
\newblock {\em Electronic Journal of Probability}, 3(11):1--22, 1998.

\end{thebibliography}
\end{document}